\newcommand\rsmraise[1]{%
  \ifx#1\displaystyle .8\else
    \ifx#1\textstyle .8\else
      \ifx#1\scriptstyle .6\else
        .45%
      \fi
    \fi
  \fi}
\xpatchcmd{\@sect}{\uppercase}{\MakeTextUppercase}{}{}
\xpatchcmd{\@sect}{\uppercase}{\MakeTextUppercase}{}{}
\def\dd{\mathbf{d}}
\def\scal#1#2{\langle #1\bv#2 \rangle}
\def\sscal#1#2{\langle #1\| #2 \rangle}
\def\pointir{\unskip . --- \ignorespaces} 
\def\ncp#1#2{#1\langle #2\rangle}
\def\ncs#1#2{#1\langle \!\langle #2\rangle \!\rangle}
\def\dext#1#2{#1\{ \!\{ #2\} \!\}}
\def\calE{{\mathcal E}}
\def\calX{{\cal X}}
\def\pol#1{\langle #1 \rangle}
\def\QY{\ncp{\Q}{Y}}
\def\scal#1#2{\langle #1\bv#2 \rangle}
\def\bv{\mid}
\def\abs#1{\bv\!#1\!\bv}
\def\ncp#1#2{#1\langle #2\rangle}
\def\conc{{\tt conc}}
\def\trr{\triangleright}
\def\trl{\triangleleft}
\def\span{\mathop\mathrm{span}\nolimits}
\def\Dom{\mathrm{Dom}}
\newcounter{per1}
\def\2#1{\ifnum#1<10 0\fi\the#1}
\xdef\isodayandtime{
{\2\day-\2\month-\the\year\space\2{\count0}:%
\2{\count2}}}
\newcommand{\bi}{\begin{itemize}}
\newcommand{\ei}{\end{itemize}}
\newcommand{\bd}{\begin{description}}
\newcommand{\ed}{\end{description}}
\newcommand{\calA}{{\mathcal A}}
\newcommand{\calL}{{\mathcal L}}
\newcommand{\calM}{{\mathcal M}}
\newcommand{\calH}{{\mathcal H}}
\newcommand{\calC}{{\mathcal C}}
\newcommand{\calD}{{\mathcal D}}
\newcommand{\N}{{\mathbb N}}
\renewcommand{\P}{\N_{>0}}
\newcommand{\Z}{{\mathbb Z}}
\newcommand{\Q}{{\mathbb Q}}
\newcommand{\C}{{\mathbb C}}
\newcommand{\Frac}[2]{\displaystyle \frac{#1}{#2}}
\newcommand{\Sum}[2]{\displaystyle{\sum_{#1}^{#2}}}
\newcommand{\Prod}[2]{\displaystyle{\prod_{#1}^{#2}}}
\def\pol#1{\langle #1 \rangle}
\def\Lyn{{\mathcal Lyn}}
\def\CX{\C \langle X \rangle}
\def\CY{\C \langle Y \rangle}
\def\abs#1{|#1|}
\def\absv#1{\|#1\|}
 \def\shuffle{\mathop{_{^{\sqcup\!\sqcup}}}} 
\gdef\stuffle{\;%
  \setlength{\unitlength}{0.0125cm}%
  \begin{picture}(20,10)(220,580) 
  \thinlines 
  \put(220,592){\line( 0,-1){ 10}} 
  \put(220,582){\line( 1, 0){ 20}} 
  \put(240,582){\line( 0, 1){ 10}} 
  \put(230,592){\line( 0,-1){ 10}} 
  \put(225,587){\line( 1, 0){ 10}} 
  \end{picture}\; 
}
\newcommand{\Li}{\operatorname{Li}}
\def\L{\mathrm{L}}
\def\H{\mathrm{H}}
\def\P{\mathrm{P}}
\def\deg{\mathrm{deg}}
\newcommand{\poly}[2]{#1 \langle #2 \rangle}
\def\CX{\poly{\C}{X}}
\def\CY{\poly{\C}{Y}}
\newcommand{\serie}[2]{#1 \langle \! \langle #2 \rangle \! \rangle}
\def\QY_0{\Q\left\langle{Y_0}\right\rangle}
\def\pol#1{\langle #1 \rangle}
\def\Lie{{\cal L}ie}
\def\CX{\C \langle X \rangle}
\def\Sum{\displaystyle\sum}
\def\Prod{\displaystyle\prod}
\def\Frac{\displaystyle\frac}
\def\path{\rightsquigarrow}
\def\bv{\mid}
\def\abs#1{\bv\!#1\!\bv}
\def\rd{\triangleright}
\def\rg{\triangleleft}
\def\deg{\mathop\mathrm{deg}\nolimits}
\def\supp{\mathop\mathrm{supp}\nolimits}
\def\binom#1#2{{#1\choose#2}}
\def\scal#1#2{\langle #1\bv#2 \rangle}
\def\ncp#1#2{#1\langle #2\rangle}
\def\ncs#1#2{#1\langle \!\langle #2\rangle \!\rangle}
\begin{document}
\begin{frontmatter}
\vskip-1cm
\title{Towards a Picard-Vessiot theory of noncommutative differential equations}
\author{G.H.E. Duchamp}
\address{University Paris 13, Sorbonne Paris City, 93430 Villetaneuse, France,}
\author{V. Hoang Ngoc Minh}
\address{University of Lille, 1 Place Déliot, 59024 Lille, France,}
\author{V. Nguyen Dinh}
\address{LIPN-UMR CNRS 7030, 99, avenue Jean-Baptiste Clément 93430 Villetaneuse, France,}
\author{P. Simonnet}
\address{University of Corsica, 20250 Corte, France.}

\begin{abstract}
A Chen generating series, along a path and with respect to $m$ differential forms,
is a noncommutative series on $m$ letters, with coefficients which are holomorphic functions
over a simply connected manifold. In other words, it is a series with variable (holomorphic) coefficients.
Such a series satisfies a first order noncommutative differential equation which is considered,
by some authors, as the universal differential equation. In this case, universality
can be seen by replacing each letter by constant matrices (resp. holomorphic vector fields)
and then solving a system of linear (resp. nonlinear) differential equations.

Via rational series, on noncommutative indeterminates and with coefficients in rings,
and their non-trivial combinatorial Hopf algebras (or pseudo-Hopf algebras), 
we propose a first step of a noncommutative
Picard-Vessiot theory and we illustrate it with the case of linear differential equations
with singular regular singularities thanks to the universal equation previously mentioned.
\begin{keyword}
Noncommutative Differential Equation,
Noncommutative Picard-Vessiot Theory,
Noncommutative Rational Series,
Noncommutative Symbolic Computation.
\end{keyword}
\end{abstract}
\end{frontmatter}
\vfill

\tableofcontents
\section{Introduction}\label{introduction}
Combinatorial Picard-Vessiot (PV for short) theory of bilinear systems\footnote{Namely - locally - linear
of the states $q_1,\ldots,q_N$ and linear of the inputs $u_0,\ldots,u_m$ \cite{PV}.}
was realized by Fliess and Reutenauer \cite{PV}, as an application of differential 
algebra \cite{Kolchin,Ritt}. This theory allows to employ, with success, linear algebraic groups 
in control theory 
(\textit{i.e.} as symmetry groups of linear differential equations), 
for which some questions were solved thanks to the theory of Hopf algebras 
\cite{Cartier} and some combinatorial and effective aspects were set in \cite{reutenauer}.

Let us, for instance, consider the following nonlinear dynamical system
\begin{eqnarray}\label{nonlinear}
\dot q(z)=A_0(q)u_0(z)+\ldots+A_m(q)u_m(z),&q(z_0)=q_0,&y(z)=f(q(z)),
\end{eqnarray}
where
\begin{enumerate}
\item $y$ is the output,
\item the vector state $q=(q_1,\ldots,q_n)$ belongs to a complex holomorphic manifold $\calM$ of dimension $n$,
\item the observation $f$ is defined within a fixed connected neighbourhood\footnote{In this
introductive description the points are loosely identified with their coordinates through some
chart $\varphi_U:U\to \C^n$ likewise, in \cite{reutenauerrealisation}, the space of holomorphic
functions $\calH(U)$ is described by $\C^{\rm cv}[\![q_1,\ldots,q_n]\!]$.} $U$ of of the initial state $q_0$.
\item the vector fields $(A_i)_{i=0,\ldots,m}$ are defined with respect to the coordinates as follows
\begin{eqnarray}\label{vectorfield}
A_i=\sum_{j=1}^nA_i^j(q)\frac{\partial}{\partial q_j},&\mbox{with}&A_i^j(q)\in\calH(U),
\end{eqnarray}
\item the inputs $(u_i)_{i=0,\ldots,m}$, as well as their inverses $(u_i^{-1})_{i=0,\ldots,m}$,
belong to a subring, $\calC_0$, of the ring of holomorphic functions $\calH(\Omega)$ with
neutral element $1_{\calH(\Omega)}$ over the simply connected manifold\footnote{This (usually
one dimensional) manifold will be the support of the iterated integrals below.} $\Omega$. 
\end{enumerate}

It is convenient (and possible) to separate the contribution of the vector fields $(A_i)_{i=0,\ldots,m}$ and that
of the differential forms $(\omega_i)_{i=0,\ldots,m}$, defined by the inputs $\omega_i(z)=u_i(z)dz$,
through the encoding alphabet $X=\{x_i\}_{i=0,\ldots,m}$ which generates the monoid $X^*$ with
neutral element $1_{X^*}$. Indeed, the output $y$ can be computed by
\begin{eqnarray}\label{output}
y(z_0,z)=\sscal{C_{z_0\path z}}{\sigma f_{|_{q_0}}}=\sum_{w\in X^*}\alpha_{z_0}^z(w){\cal Y}(w)[f]_{|_{q_0}},
\end{eqnarray}
as the pairing (under suitable convergence conditions \cite{fliess2,words03,cade,reutenauerrealisation})
between the Chen series\footnote{By a Ree's theorem \cite{ree}, there is a primitive series
$L_{z_0\path z}=\sum_{n\ge1}L_n\in\widehat{{\calH(\Omega)}\pol{X}}$ s.t. $e^{L_{z_0\path z}}=C_{z_0\path z}$,
meaning that $C_{z_0\path z}$ is group-like and $L_n$ is (homogenous of degree $n\ge1$) primitive series.}
of $(\omega_i)_{i=0,\ldots,m}$ along the path $z_0\path z$ over $\Omega$, $C_{z_0\path z}\in\serie{\calH(\Omega)}{X}$
\cite{cartier1}, and the generating series of \eqref{nonlinear}, $\sigma f_{|_{q_0}}\in\serie{\calH(U)}{X}$
\cite{fliess2}, defined as follows
\begin{eqnarray}\label{Chen}
C_{z_0\path z}:=\Sum_{w\in X^*}\alpha_{z_0}^z(w)w&\mbox{and}&\sigma f_{|_{q_0}}:=\sum_{w\in X^*}{\cal Y}(w)[f]_{|_{q_0}}w,
\end{eqnarray}
where, in \eqref{output}--\eqref{Chen}, the iterated integral $\alpha_{z_0}^z(w)$ and the differential operator ${\cal Y}(w)$,
are decoded, from the word $w\in X^*$, recursively as follows
\begin{eqnarray}\label{notation}
\left\{\begin{array}{lcll}
\alpha_{z_0}^z(w)=1_{\calH(\Omega)}&\mbox{and}&{\cal Y}(w)=\mathrm{Id},
&\mbox{for }w=1_{X^*},\\
\alpha_{z_0}^z(w)=\displaystyle\int_{z_0}^z\omega_i(s)\alpha_{z_0}^{s}(v)&\mbox{and}&{\cal Y}(w)=A_i\circ{\cal Y}(v),
&{\displaystyle\mbox{for }w=x_iv,\hfill\atop\displaystyle x_i\in X,v\in X^*.}
\end{array}\right.
\end{eqnarray}

In this work, following this route, considering the differential ring $(\calH(\Omega),\partial)$ and
equipping $\serie{\calH(\Omega)}{X}$ with the derivation defined, for any $S\in\ncs{\calH(\Omega)}{X}$, by
\begin{eqnarray}
{\bf d}S=\sum_{w\in X^*}(\partial\scal{S}{w})w,
\end{eqnarray}
we can see that the Chen series satisfies the following noncommutative differential equation
\begin{eqnarray}\label{NCDE}
{\bf d}S=MS&\mbox{with}&M=u_0x_0+\ldots+u_mx_m,
\end{eqnarray}
considered by many authors as the universal differential equation \cite{cartier1,deligne,drinfeld1,drinfeld2,CM}.
Universality can be seen by specialization, \textit{i.e.} replacing the letters by constant matrices
(resp. holomorphic vector fields) and therefore obtaining linear (resp. nonlinear) differential equations 
(see Remark \ref{rk3} below)
as well as their solutions. From equation \eqref{NCDE}, it follows (see, for example, \cite{eulerianfunctions}) that
a PV theory of nonlinear systems \eqref{nonlinear} should be intimately connected with \eqref{NCDE} (the reader may
remark that, due to the connectedness of $\Omega$, the constants of $(\ncs{\calH(\Omega)}{X},{\bf d})$ are
\begin{eqnarray}
\mathrm{Const}(\serie{\calH(\Omega)}{X})=\ker{\bf d}=\serie{\C.1_{\calH(\Omega)}}{X}.
\end{eqnarray}

This culminates with the fact that the coefficients of any suitable\footnote{\textit{i.e.} group-like
at one - interior or frontier - point.} solution is group-like, \textit{i.e.} satisfies\footnote{
In the first identity, also called Friedrichs criterion, is involved the shuffle product ($\shuffle$)
\cite{cartier1,fliess1,reutenauer}.}, for any $u,v\in X^*$ and $x_i\in X$,
\begin{eqnarray}
\partial\scal{S}{x_iu}=u_i\scal{S}{v}&\mbox{and}&\scal{S}{u\shuffle v}=\scal{S}{u}\scal{S}{v};
\scal{S}{1_{X^*}}=1_{\calH(\Omega)}
\end{eqnarray}
Due to the fact that $\Omega$ is simply connected, the coordinate values of this series
only depend on the endpoints and not on paths drawn on $\Omega$. Denoting the subalgebra
of $(\calH(\Omega),\partial)$ generated by the family $(f_i)_{i\in I}$ and derivatives
by $\dext{\C}{(f_i)_{i\in I}}$ \cite{VdP} (\textit{i.e.} the differential algebra
generated by $(f_i)_{i\in I}$), it follows that \cite{PV}
\begin{eqnarray}
\mathrm{span}_{\C}\{\scal{{\bf d}^lS}{w}\}_{w\in X^*,l\ge0}
&\subset&\mathrm{span}_{\dext{\C}{(u_i)_{i=0,..,m}}}\{\scal{S}{w}\}_{w\in X^*}\\
&\subset&\mathrm{span}_{\dext{\C}{(u_i^{\pm1})_{i=0,..,m}}}\{\scal{S}{w}\}_{w\in X^*}\label{der}
\end{eqnarray}
and then, in Section \ref{PVNC}, the isomorphism between $\mathrm{span}_{\dext{\C}{(u_i^{\pm1})_{i=0,..,m}}}\{\alpha_{z_0}^z(w)\}_{w\in X^*}$
and $\dext{\C}{(u_i^{\pm1})_{i=0,..,m}}\otimes_{\C}\mathrm{span}_{\C}\{\alpha_{z_0}^z(w)\}_{w\in X^*}$ will be examined (Theorem \ref{ind_lin})
via the PV-extension related to \eqref{NCDE} and, on the other hand, the output of \eqref{nonlinear} will be computed (Theorem \ref{PeanoBacker})
by pairing the series given in \eqref{Chen}. As example, this calculation will be achieved according to the algebraic combinatorics of rational
series, established beforehand in Sections \ref{Combinatorial} (Theorems \ref{isomorphy}, \ref{residual}) and \ref{TST}
(Theorems \ref{exchangeable}, \ref{Subbialgebras}).

\section{Combinatorial framework}\label{Combinatorial}
In this section, coefficients are taken in a commutative ring\footnote{although some of
the properties already hold for a general commutative semiring \cite{berstel}.} $A$ and,
unless explicitly stated, all tensor products will be considered over the ambient ring (or field). 

\subsection{Factorization in bialgebras}
In section \ref{introduction}, the encoding alphabet $X$ was already introduced.
In particular, for $m=1$ (\textit{i.e.} $X=\{x_0,x_1\})$,
let us note that there are one-to-one correspondences
\begin{eqnarray}
(s_1,\ldots,s_r)\in\N_+^r\leftrightarrow x_0^{{s_1}-1}x_1\ldots x_0^{{s_r}-1}x_1\in X^*x_1
\displaystyle\mathop{\rightleftharpoons}_{\pi_X}^{\pi_Y}y_{s_1}\ldots y_{s_r}\in Y^*,
\end{eqnarray}
where $Y:=\{y_k\}_{k\ge1}$ and $\pi_X$ is the $\conc$ morphism, from $\ncp{A}{Y}$ to $\ncp{A}{X}$, mapping
$y_k$ to $x_0^{k-1}x_1$. This morphism $\pi_X$ admits an adjoint $\pi_Y$ for the two standard scalar products\footnote{
That is to say $(\forall p\in\ncp{A}{X})\;(\forall q\in\ncp{A}{Y})\;(\scal{\pi_Yp}{q}_Y=\scal{p}{\pi_Xq}_X)$.}
which has a simple combinatorial description: the restriction of $\pi_Y$ to the subalgebra $(A1_{X^*}\oplus\ncp{A}{Y}x_1,\conc)$,
is an isomorphism given by $\pi_Y(x_0^{k-1}x_1)=y_k$ (and the kernel of the non-restricted $\pi_Y$ is $\ncp{A}{X}x_0$).
For all matters concerning finite ($X$ and similar) or infinite ($Y$ and similar) alphabets,
we will use a generic model noted $\calX$ in order to state their common combinatorial features.
Let us recall also that the coproduct $\Delta_{\conc}$ is defined, for any $w\in\calX^*$, as follows
\begin{eqnarray}\label{Dconc}
\Delta_{\conc}w=\sum_{u,v\in\calX^*,uv=w}u\otimes v.
\end{eqnarray}

As an algebra the $A$-module $\ncp{A}{\calX}$ is equipped with the associative unital
concatenation and the associative commutative and unital shuffle product. The latter being
defined, for any $x,y\in\calX$ and $u,v,w\in\calX^*$, by the following recursion
\begin{eqnarray}\label{shuffle}
w\shuffle 1_{\calX^*}=1_{\calX^*}\shuffle w=w&\mbox{and}&
xu\shuffle yv=x(u\shuffle yv)+y(xu\shuffle v)
\end{eqnarray}
or, equivalently, by its dual comultiplication (which is a morphism for concatenations\footnote{On $\ncp{A}{\calX}$
and $\ncp{A}{\calX}\otimes\ncp{A}{\calX}$, respectively.}),  defined, for each letter $x\in\calX$, by
\begin{eqnarray}\label{Dshuffle}
\Delta_{\shuffle}x=1_{\calX^*}\otimes x+x\otimes1_{\calX^*}.
\end{eqnarray}

Once $\calX$ has been totally ordered\footnote{For technical reasons, the orders $x_0<x_1$ (for $X$)
and $y_1>\ldots y_n>y_{n+1}>\ldots$ (for $Y$) are usual.}, the set of Lyndon words over $\calX$ will
be denoted by $\Lyn\calX$. A pair of Lyndon words $(l_1,l_2)$ is called the standard factorization
of a Lyndon $l$ (and will be noted $(l_1,l_2)=st(l)$) if $l=l_1l_2$ and $l_2$ is the longest nontrivial
proper right factor of $l$ or, equivalently, its smallest such (for the lexicographic ordering, see
\cite{lothaire}  for proofs and details). According to a theorem by Radford, the set of Lyndon words
form a pure transcendence basis of the $A$-shuffle algebras $(\ncp{A}{\calX},\shuffle,1_{\calX^*})$.

It is well known that the enveloping algebra $\mathcal{U}(\ncp{\calL ie_{A}}{\calX})$
is isomorphic to the (connected, graded and co-commutative) bialgebra\footnote{In case $A$
is a $\Q$-algebra, the isomorphism $\mathcal{U}(\ncp{\calL ie_{A}}{\calX})\simeq\calH_{\shuffle}(\calX)$
can also be seen as an easy application of the CQMM theorem.} $\calH_{\shuffle}(\calX)=(\ncp{A}{\calX},
\allowbreak\conc,1_{\calX^*},\Delta_{\shuffle},{\tt e})$ (the counit being here
${\tt e}(P)=\scal{P}{1_{\calX^*}}$) and, via the pairing
\begin{eqnarray}
\ncs{A}{\calX}\otimes_{A}\ncp{A}{\calX}&\longrightarrow&A,\\
T\otimes P&\longrightarrow&\scal{T}{P}:=\sum_{w\in\calX^*}\scal{T}{w}\scal{P}{w},\label{pairing}
\end{eqnarray}
we can, classically, endow $\ncp{A}{\calX}$ with the graded\footnote{For ${\cal X}=X$ or $=Y$ the 
corresponding monoids are equipped with length functions, for $X$ we consider the length of words
and for $Y$ the length is given by the weight $\ell(y_{i_1}\ldots y_{i_n})=i_1+\ldots+i_n$.
This naturally induces a grading of $\ncp{A}{{\cal X}}$ and $\ncp{\calL ie_{A}}{{\cal X}}$ in free
modules of finite dimensions. For general  ${\cal X}$, we consider the fine grading \cite{reutenauer}
\textit{i.e.} the grading by all partial degrees which, as well, induces a grading of $\ncp{A}{{\cal X}}$
and $\ncp{\calL ie_{A}}{{\cal X}}$ in free modules of finite dimensions.} linear basis
$\{P_w\}_{w\in \calX^*}$ (expanded after any homogeneous basis $\{P_l\}_{l\in \Lyn\calX}$
of $\ncp{\calL ie_{A}}{\calX}$) and its graded dual basis $\{S_w\}_{w\in\calX^*}$ 
(containing the pure transcendence basis $\{S_l\}_{l\in\Lyn\calX}$ of the $A$-shuffle algebra).
In the case when $A$ is a $\Q$-algebra, we also have the following factorization\footnote{
Also called MSR factorization after the names of M\'elan\c con,
Sch\"utzenberger and Reutenauer.} of the diagonal series,
\textit{i.e.} \cite{reutenauer} (here all tensor products are over $A$)
\begin{eqnarray}\label{diagonalX}
{\cal D}_{\calX}:=\sum_{w\in\calX^*}w\otimes w=\sum_{w\in\calX^*}S_w\otimes P_w
=\prod_{l\in\Lyn\calX}^{\searrow}e^{S_l\otimes P_l}
\end{eqnarray}
and (still in case $A$ is a $\Q$-algebra) dual bases of homogenous 
polynomials $\{P_w\}_{w\in\calX^*}$
and $\{S_w\}_{w\in\calX^*}$ can be constructed recursively as follows
\begin{eqnarray}\label{quarante-deux}
\left\{\begin{array}{llll}
P_x=x,
&S_x=x
&\mbox{for }x\in\calX,\\
P_l=[P_{l_1},P_{l_2}],
&S_l=yS_{l'},
&{\displaystyle\mbox{for }l=yl'\in\Lyn\calX - \calX\atop\displaystyle st(l)=(l_1,l_2),}\\
P_w=P_{l_1}^{i_1}\ldots P_{l_k}^{i_k},
&S_w=\displaystyle\frac{S_{l_1}^{\shuffle i_1}\shuffle\ldots\shuffle S_{l_k}^{\shuffle i_k}}{i_1!\ldots i_k!},
&{\displaystyle\mbox{for }w=l_1^{i_1}\ldots l_k^{i_k},\mbox{ with }l_1,\ldots,\atop\displaystyle l_k\in\Lyn\calX,l_1>\ldots>l_k.}
\end{array}\right.
\end{eqnarray}
The graded dual of $\calH_{\shuffle}(\calX)$ is $\calH_{\shuffle}^{\vee}(\calX)
=(\ncp{A}{\calX},{\shuffle},1_{\calX^*},\Delta_{\conc},\epsilon)$.

As an algebra, the module $\ncp{A}{Y}$ is also equipped with 
the associative commutative and unital quasi-shuffle product defined,
for $u,v,w\in Y^*$ and $y_i,y_j\in Y$, by
\begin{eqnarray}
&w\stuffle 1_{Y^*}=1_{Y^*}\stuffle w=w,\\
&y_iu\stuffle y_jv=y_i(u\stuffle y_jv)+y_j(y_iu\stuffle v)+y_{i+j}(u\stuffle v).
\end{eqnarray}
This product also can be dualized according to ($y_k\in Y$)
\begin{eqnarray}\label{Dstuffle}
\Delta_{\stuffle}y_k:=y_k\otimes 1_{Y^*}+1_{Y^*}\otimes y_k +\sum_{i+j=k}y_i\otimes y_j
\end{eqnarray}
which is also a $\conc$-morphism (see \cite{siblings}). We then get another
(connected, graded and co-commutative) bialgebra which, in case $A$ is a $\Q$-algebra, is
isomorphic to the enveloping algebra of the Lie algebra of its primitive elements,
\begin{eqnarray}
\calH_{\stuffle}(Y)=(\ncp{A}{Y},\conc,1_{Y^*},\Delta_{\stuffle},{\tt e})\cong\mathcal{U}(\mathrm{Prim}(\calH_{\stuffle}(Y))),
\end{eqnarray}
where $\mathrm{Prim}(\calH_{\stuffle}(Y))=\mathrm{Im}(\pi_1)=\span_{A}\{\pi_1(w)\vert{w\in Y^*}\}$
and $\pi_1$ is the eulerian projector defined, for any $w\in Y^*$, by \cite{acta,VJM}
\begin{eqnarray}\label{piii_1}
\pi_1(w)=w+\sum_{k=2}^{(w)}\frac{(-1)^{k-1}}k\sum_{u_1,\ldots,u_k\in Y^+}\scal{w}{u_1\stuffle\ldots\stuffle u_k}u_1\ldots u_k,
\end{eqnarray}
and, for any $w=y_{i_i}\ldots y_{i_k}\in Y^*$, $(w)$ denotes the number $i_i+\ldots+i_k$.

\begin{remark}\label{prim}
By \eqref{Dconc} and \eqref{Dshuffle}, any letter $x\in\calX$ is primitive, for $\Delta_{\conc}$ and $\Delta_{\shuffle}$. 
By \eqref{Dstuffle}, the polynomials $\{\pi_1(y_k)\}_{k\ge2}$ and only the letter $y_1$ are primitive, for $\Delta_{\stuffle}$.
\end{remark}

Now, let $\{\Pi_w\}_{w\in Y^*}$ be the linear basis, expanded by decreasing Poincar\'e-Birkhoff-Witt
(PBW for short) after any basis $\{\Pi_l\}_{l\in \Lyn Y}$
of $\mathrm{Prim}(\calH_{\stuffle}(Y))$ homogeneous in weight\footnote{
Factorization \eqref{diagonalY} will be true in particular for the basis \eqref{pii_1}
explicitly constructed there.}, and let $\{\Sigma_w\}_{w\in Y^*}$
be its dual basis which contains the pure transcendence basis $\{\Sigma_l\}_{l\in\Lyn Y}$
of the $A$-quasi-shuffle algebra. One also has the factorization of
the diagonal series ${\cal D}_Y$, on $\calH_{\stuffle}(Y)$,
which reads\footnote{Again all tensor products will be taken over $A$.
Note that this factorization holds for any enveloping algebra as announced
in \cite{reutenauer}. Of course, the diagonal series no 
longer exists and must be replaced by the identity $Id_{\mathcal{U}}$
(see \cite{Duchamp-MO1}, coda for details).} \cite{acta,VJM,CM} 
\begin{eqnarray}\label{diagonalY}
{\cal D}_Y:=\sum_{w\in Y^*}w\otimes w=
\sum_{w\in Y^*}\Sigma_w\otimes\Pi_w=\prod_{l\in\Lyn Y}^{\searrow}e^{\Sigma_l\otimes\Pi_l}.
\end{eqnarray}

We are now in the position to state the following 
\begin{theorem}[\cite{VJM,CM}]\label{isomorphy}
Let $A$ be a $\Q$-algebra, then the endomorphism of algebras
$\varphi_{\pi_1}:(\ncp{A}{Y},\conc,1_{Y^*})\longrightarrow(\ncp{A}{Y},\conc,1_{Y^*})$
mapping $y_k$ to $\pi_1(y_k)$, is an automorphism of $\ncp{A}{Y}$ realizing an isomorphism
of bialgebras between $\calH_{\shuffle}(Y)$ and
\begin{eqnarray*}
\calH_{\stuffle}(Y)\cong\mathcal{U}(\mathrm{Prim}(\calH_{\stuffle}(Y))).
\end{eqnarray*}
In particular, it can be easily checked that the following diagram commutes
\begin{center}
\begin{tikzcd}[column sep=3em]
\ncp{A}{Y}\ar[hook]{r}{\Delta_{\shuffle}}\ar[swap]{d}{\varphi_{\pi_1}}& 
\ncp{A}{Y}\otimes\ncp{A}{Y}\ar[]{d}{\varphi_{\pi_1}\otimes\varphi_{\pi_1}}\\
\ncp{A}{Y}
\ar[hook]{r}{\Delta_{\stuffle}}& 
\ncp{A}{Y}\otimes\ncp{A}{Y} 
\end{tikzcd}
\end{center}
\end{theorem}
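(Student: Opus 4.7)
The plan is to establish four ingredients in turn: that $\varphi_{\pi_1}$ is a well-defined concatenation morphism, that it is bijective, that it intertwines the two coproducts (which is the commuting diagram), and finally that the two announced bases are obtained by transport along $\varphi_{\pi_1}$ and its inverse adjoint.

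First, since $\ncp{A}{Y}$ is the free associative algebra on $Y$, the prescription $y_k\mapsto\pi_1(y_k)$ extends uniquely to a concatenation endomorphism $\varphi_{\pi_1}$. From \eqref{piii_1}, every monomial occurring in $\pi_1(y_k)$ has weight exactly $k$, so $\varphi_{\pi_1}$ preserves the weight grading. Moreover $\pi_1(y_k)=y_k+R_k$ where $R_k$ is supported on concatenations of length at least $2$; filtering each weight-homogeneous component by length, the transition matrix of $\varphi_{\pi_1}$ on the PBW basis is upper unitriangular, hence invertible, giving bijectivity.

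Next I would show that $\pi_1(y_k)$ is primitive for $\Delta_{\stuffle}$. By iterating $\Delta_{\stuffle}$ and dualising, one has $(\mathrm{Id}-1_{Y^*}{\tt e})^{\star r}(w)=\sum_{u_1,\ldots,u_r\in Y^+}\scal{w}{u_1\stuffle\ldots\stuffle u_r}\,u_1\cdots u_r$, so formula \eqref{piii_1} is exactly the value at $y_k$ of the convolution logarithm $\log_{\star}(\mathrm{Id}_{\calH_{\stuffle}(Y)})$. Since $\calH_{\stuffle}(Y)$ is connected graded cocommutative and $A$ is a $\Q$-algebra, this is the standard Eulerian projector onto $\mathrm{Prim}(\calH_{\stuffle}(Y))$, proving primitivity. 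Consequently, both $(\varphi_{\pi_1}\otimes\varphi_{\pi_1})\circ\Delta_{\shuffle}$ and $\Delta_{\stuffle}\circ\varphi_{\pi_1}$ are concatenation-morphisms from $\ncp{A}{Y}$ into $\ncp{A}{Y}\otimes\ncp{A}{Y}$, and they agree on each generator $y_k$, where both evaluate to $\pi_1(y_k)\otimes 1_{Y^*}+1_{Y^*}\otimes\pi_1(y_k)$; the diagram commutes. Together with $\varphi_{\pi_1}(1_{Y^*})=1_{Y^*}$ and ${\tt e}\circ\varphi_{\pi_1}={\tt e}$, this promotes $\varphi_{\pi_1}$ to a bialgebra isomorphism $\calH_{\shuffle}(Y)\to\calH_{\stuffle}(Y)$, and CQMM supplies the identification with $\mathcal{U}(\mathrm{Prim}(\calH_{\stuffle}(Y)))$.

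For the bases, define $\Pi_l:=\varphi_{\pi_1}(P_l)$ for $l\in\Lyn Y$; as images of the primitive Lyndon basis $\{P_l\}$ of $\ncp{\calL ie_A}{Y}=\mathrm{Prim}(\calH_{\shuffle}(Y))$ under a bialgebra isomorphism, these form a weight-homogeneous basis of $\mathrm{Prim}(\calH_{\stuffle}(Y))$. Propagating the PBW identity of \eqref{quarante-deux} through the concatenation morphism gives $\Pi_w=\varphi_{\pi_1}(P_w)$ for every $w\in Y^*$. The dual statement $\Sigma_w=\check\varphi_{\pi_1}^{-1}(S_w)$ then follows by a one-line adjoint calculation: $\scal{\check\varphi_{\pi_1}^{-1}S_w}{\Pi_v}=\scal{S_w}{\varphi_{\pi_1}^{-1}\Pi_v}=\scal{S_w}{P_v}=\delta_{w,v}$. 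The main obstacle is the primitivity step: the convolution-logarithm identification gives a clean conceptual proof but hides the combinatorics, while a direct term-by-term verification of $\Delta_{\stuffle}(\pi_1(y_k))=\pi_1(y_k)\otimes 1_{Y^*}+1_{Y^*}\otimes\pi_1(y_k)$ from \eqref{piii_1} is possible but requires carefully tracking the contribution of the non-shuffle term $y_{i+j}$ in \eqref{Dstuffle}, which couples multiplicities across the Eulerian expansion.
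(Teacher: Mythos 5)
Your proposal is correct, and since the paper does not prove Theorem \ref{isomorphy} (it is quoted from \cite{VJM,CM}, with the diagram left as ``easily checked''), there is no in-text argument to compare against; your write-up supplies exactly the missing verification, and it follows the route the surrounding text suggests (freeness of $\ncp{A}{Y}$ for {\tt conc}, primitivity of $\pi_1(y_k)$ for $\Delta_{\stuffle}$, reduction of the diagram to generators, transport of the PBW and dual bases). The one step carrying real weight is primitivity, and your identification of \eqref{piii_1} with the convolution logarithm of the identity of the connected, weight-graded, cocommutative bialgebra $\calH_{\stuffle}(Y)$ is legitimate over any $\Q$-algebra (it uses only that $\Delta_{\stuffle}$ is dual to $\stuffle$ and that the graded components are free of finite rank); this is precisely the statement $\mathrm{Prim}(\calH_{\stuffle}(Y))=\mathrm{Im}(\pi_1)$ that the paper records just before the theorem. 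Three small points of care: (i) the unitriangularity argument for bijectivity is cleanest on the word basis of each weight-homogeneous component (filtered by length), rather than on the PBW basis as you phrase it, though both work since the $P_w$ are length-homogeneous; (ii) the adjoint $\check\varphi_{\pi_1}^{-1}$ a priori lands in series, so you should note that it preserves polynomials because $\varphi_{\pi_1}$ is weight-graded with finite-rank homogeneous components; (iii) the ``Moreover'' clause must be read for the specific basis of \eqref{pii_1}, and your definition $\Pi_l:=\varphi_{\pi_1}(P_l)$ recovers it by induction on the standard factorization since $\varphi_{\pi_1}$ is a {\tt conc}-morphism, hence compatible with the brackets and the decreasing products.
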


Hence, the bases $\{\Pi_w\}_{w\in Y^*}$ and $\{\Sigma_w\}_{w\in Y^*}$
of $\mathcal{U}(\mathrm{Prim}(\calH_{\stuffle}(Y)))$ are images by
$\varphi_{\pi_1}$ and by the adjoint mapping of its inverse, $\check\varphi_{\pi_1}^{-1}$
of $\{P_w\}_{w\in Y^*}$ and $\{S_w\}_{w\in Y^*}$, respectively.
Algorithmically, by Remark \ref{prim}, the dual bases of homogenous polynomials $\{\Pi_w\}_{w\in Y^*}$
and $\{\Sigma_w\}_{w\in Y^*}$ can be constructed directly and recursively by
\begin{eqnarray}\label{pii_1}
\left\{\begin{array}{llll}
\Pi_{y_s}=\pi_1(y_s),
&\Sigma_{y_s}=y_s
&\mbox{for }y_s\in Y,\\
\Pi_{l}=[\Pi_{l_1},\Pi_{l_2}],
&\Sigma_l=\Sum_{(*)}\frac{y_{s_{k_1}+\ldots+s_{k_i}}}{i!}\Sigma_{l_1\ldots l_n},
&{\displaystyle\mbox{for }l\in\Lyn Y-Y\atop\displaystyle st(l)=(l_1,l_2),}\\
\Pi_{w}=\Pi_{l_1}^{i_1}\ldots\Pi_{l_k}^{i_k},
&\Sigma_w=\displaystyle\frac{\Sigma_{l_1}^{\stuffle i_1}\stuffle\ldots\stuffle\Sigma_{l_k}^{\stuffle i_k}}{i_1!\ldots i_k!},
&{\displaystyle\mbox{for }w=l_1^{i_1}\ldots l_k^{i_k},\mbox{ with }l_1,\ldots,\atop\displaystyle l_k\in\Lyn Y,l_1>\ldots>l_k.}
\end{array}\right.
\end{eqnarray}
In $(*)$, the sum is taken over all $\{k_1,\ldots,k_i\}\subset\{1,\ldots,k\}$
and $l_1\ge\ldots\ge l_n$ such that $(y_{s_1},\ldots,y_{s_k})\stackrel{*}
{\Leftarrow}(y_{s_{k_1}},\ldots,y_{s_{k_i}},l_1,\ldots,l_n)$, where
$\stackrel{*}{\Leftarrow}$ denotes the transitive closure of the relation
on standard sequences, denoted by $\Leftarrow$ \cite{SLC74,reutenauer}.

To end this section, let us extend $\conc$ and $\shuffle$, for any series $S,R\in\ncs{A}{\calX}$, by
\begin{eqnarray}\label{extenoverseries}
SR=\sum_{w\in\calX^*}\biggl(\sum_{u,v\in\calX^*\atop uv=w}\scal{S}{u}\scal{R}{v}\biggr)w
&\mbox{and}&S\shuffle R=\sum_{u,v\in\calX^*}\scal{S}{u}\scal{R}{v}u\shuffle v,
\end{eqnarray}
and $\stuffle$, for any series $S,R\in\ncs{A}{Y}$, by
\begin{eqnarray}
S\stuffle R=\sum_{u,v\in Y^*}\scal{S}{u}\scal{R}{v}u\stuffle v.
\end{eqnarray}
Let us also extend the coproduct $\Delta_{\stuffle}$ (resp. $\Delta_{\conc}$ and $\Delta_{\shuffle}$)
given in \eqref{Dstuffle} (resp. \eqref{Dconc} and \eqref{Dshuffle})
over $\ncs{A}{Y}$ (resp. $\ncs{A}{\calX}$) by linearity as follows
\begin{eqnarray}
\forall S\in\ncs{A}{Y},\quad
\Delta_{\stuffle}S&=&\sum_{w\in Y^*}\scal{S}{w}\Delta_{\stuffle}w\in{\ncs{A}{Y^*\otimes Y^*}},\label{D1}\\
\forall S\in\ncs{A}{\calX},\quad
\Delta_{\shuffle}S&=&\sum_{w\in\calX^*}\scal{S}{w}\Delta_{\shuffle}w\in{\ncs{A}{\calX^*\otimes\calX^*}},\label{D2}\\
\forall S\in\ncs{A}{\calX},\quad
\Delta_{\conc}S&=&\sum_{w\in\calX^*}\scal{S}{w}\Delta_{\conc}w\in{\ncs{A}{\calX^*\otimes\calX^*}}.\label{D3}
\end{eqnarray}

\begin{remark}
In \eqref{D1}--\eqref{D3}, if $A=K$, being a field, then $\ncs{K}{\calX}\otimes\ncs{K}{\calX}$
embeds (injectively) in $\ncs{K}{\calX^*\otimes\calX^*}\cong\ncs{[\ncs{K}{\calX}]}{\calX}$.
Indeed, $\ncs{K}{\calX}\otimes\ncs{K}{\calX}$ contains the elements of the form
$\sum_{i\in I}G_i\otimes D_i$, for some $I$ finite and $(G_i,D_i)\in\ncs{K}{\calX}\times\ncs{K}{\calX}$.
But $\sum_{i\ge0}u^i\otimes v^i$ belongs to $\ncs{K}{\calX^*\otimes\calX^*}$
and does not belong to $\ncs{K}{\calX}\otimes\ncs{K}{\calX}$, for $u,v\in\calX^{\ge1}$.
\end{remark}

\begin{definition}\label{dec0}
Any series $S\in\ncs{A}{Y}$ (resp. $\ncs{A}{\calX}$) is said to be
\begin{enumerate}
\item a $\stuffle$ (resp. $\shuffle,\conc$)-character of $(\ncp{A}{X},\conc,1_{X^*})$ iff,
for any $u,v\in Y^*$ (resp. $\calX^*$), one has $\scal{S}{1_{Y^*}}=1_A$
(resp. $\scal{S}{1_{\calX^*}}=1_A$) and $\scal{S}{u\stuffle v}=\scal{S}{u}\scal{S}{v}$
(resp. $\scal{S}{u\shuffle v}=\scal{S}{u}\scal{S}{v},\scal{S}{uv}=\scal{S}{u}\scal{S}{v}$).
\item an infinitesimal $\stuffle$ (resp. $\shuffle,\conc$)-character of $(\ncp{A}{X},\conc,1_{X^*})$
iff, for any $u,v\in Y^*$ (resp. $\calX^*$), one has
$\scal{S}{u\stuffle v}=\scal{S}{u}\scal{v}{1_{Y^*}}+\scal{u}{1_{Y^*}}\scal{S}{v}$
(resp. $\scal{S}{u\shuffle v}=\scal{S}{u}\scal{v}{1_{Y^*}}+\scal{u}{1_{Y^*}}\scal{S}{v},
\scal{S}{uv}=\scal{S}{u}\scal{v}{1_{Y^*}}\allowbreak+\scal{u}{1_{Y^*}}\scal{S}{v}$).
Moreover, if $\scal{S}{1_{Y^*}}=1_A$ (resp. $\scal{S}{1_{\calX^*}}=1_A$)
then $\scal{S}{u\stuffle v}=0$ (resp. $\scal{S}{u\shuffle v}=0,\scal{S}{uv}=0$).
\end{enumerate}
\end{definition}

Now, let us see how these combinatorics will operate over rational series to describe,
as illustration, solutions of linear differential equations in Section \ref{PVNC} below.

\subsection{Representative series}
Representative (or rational) series are the representative functions on the free monoid\footnote{
These functions were considered on groups in \cite{Cartier,ChariPressley}.} \cite{SDSC}
and their magic is that it rests on four (apparently distant) pillars:
\begin{itemize}
\item Separated coproduct (SC) uniquely for fields,
\item Finite orbit by shifts (FS), 
\item Result of a rational expression (RE), 
\item Linear representation (LR).
\end{itemize}

\begin{definition}\label{dec1}
Let $S\in\ncs{A}{\calX}$ (resp. $\ncp{A}{\calX}$) and $P\in\ncp{A}{\calX}$ (resp. $\ncs{A}{\calX}$).
\begin{enumerate}
\item The {\it left} (resp. {\it right}) \textit{shift}\footnote{Some schools (as Jacob one, see \cite{jacob,hespel})
used to call this a {\it residual}. These actions are none other than the shifts of functions of harmonic analysis.}
of $S$ by $P$, is $P\rd S$ (resp. {$S\rg P$}) defined by\footnote{They are associative, commute with each other:
$S\rg(PR)=(S\rg P)\rg R,P\rd(R\rd S)=(P.R)\rd S$ and $(P\rg S)\rd R=P\rg(S\rd R)$ and, for $x,y\in\calX,w\in\calX^*$,
$x\rd(wy)=(yw)\rg x=\delta_x^yw$ (Kronecker delta).} 
\begin{eqnarray*}
\forall w\in\calX^*,&\scal{{P\rd S}}{w}=\scal{S}{wP}&(\mbox{resp. }\scal{{S\rg P}}{w}=\scal{S}{Pw}).
\end{eqnarray*}

\item For any $S\in\ncs{A}\calX$ such that $\scal{S}{1_{\calX^*}}=0$, the Kleene star of $S$ is defined as\footnote{
Using one of the topologies of section \ref{preliminaries} (adapted with $A$ replacing $\calH(\Omega)$), we have
$S^*=\sum_{n\ge0}S^n$. We also get the fact that the space $\widehat{A.\calX}$ (used below) of series of degree $1$,
\textit{i.e.} the set $\{\sum_{x\in\cal X}\alpha(x)x\}_{\alpha\in A^X}$ is the closure of the $A$-module $A.\calX$
generated by letters. In the case of a finite alphabet however (here $\calX=X$) \cite{SDSC}, $\widehat{A.\calX}=A.\calX$.}
$S^*=(1-S)^{-1}$.

\item\label{iii} In case $A=K$ is a field, one can define also the Sweedler's dual
$\calH_{\shuffle}^{\circ}(\calX)$ of $\calH_{\shuffle}(\calX)$ by
$S\in\calH_{\shuffle}^{\circ}(\calX)\iff\Delta_{\conc}(S)=\sum_{i\in I}G_i\otimes D_i$
\cite{reutenauer}, for some $I$ finite, $\{G_i\}_{i\in I};\{D_i\}_{i\in I}$ being series
(as a matter of fact, it can be shown that they even can been choosen in
$\calH_{\shuffle}^{\circ}(\calX)$, see \cite{CM}).
\end{enumerate}
\end{definition}

\begin{theorem}[\cite{DR,SDSC,orlando,reutenauer}]\label{residual}
For $S\in\ncs{A}{\calX}$, the following assertions are equivalent\footnote{
When $A$ is noetherian, first condition is equivalent to the fact that the module
generated by $\{S\trl w\}_{w\in \calX^*}$ (resp. $\{w\rd S\}_{w\in \calX^*}$)
is finitely generated (and more precisely, in this case, by a finite
number of those shifts). Unfortunately we are not in this case here,
but our ring being without zero divisors (analytic functions),
we can use the fraction field, here being realized by germs \cite{Linz}.}
\begin{enumerate}
\item\label{i} The shifts $\{S\trl w\}_{w\in \calX^*}$ (resp. $\{w\rd S\}_{w\in \calX^*}$)
lie in a finitely generated shift-invariant $A$-module\footnote{see \cite{jacob2}.}.

\item The series $S$ belongs to the (algebraic) closure of $\widehat{A.\calX}$
by the operations $\{\conc,+,*\}$ (within $\ncs{A}\calX$). 

\item There is a linear representation $(\nu,\mu,\eta)$, of rank $n$, for $S$ with
$\nu\in M_{1,n}(A),\allowbreak\eta\in M_{n,1}(A)$ and a morphism of monoids
$\mu:\calX^*\rightarrow M_{n,n}(A)$ such that
\begin{eqnarray*}
S=\sum_{w\in\calX^*}\bigl(\nu\mu(w)\eta\bigr)w.
\end{eqnarray*}
\end{enumerate}
\end{theorem}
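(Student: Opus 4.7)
The plan is to prove the cyclic chain of implications $(3) \Rightarrow (1) \Rightarrow (3)$ together with $(3) \Leftrightarrow (2)$, thus establishing all three equivalences. Throughout, I reason with the right shifts $\{S \trl w\}$; the case of left shifts is entirely symmetric by the commutation rules recalled in the footnote of Definition \ref{dec1}.

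\textbf{Direction (3) $\Rightarrow$ (1).} Given a linear representation $(\nu,\mu,\eta)$ of rank $n$, a direct computation gives, for any $u \in \calX^*$,
\[
\scal{S \trl u}{w} = \scal{S}{uw} = \nu\,\mu(u)\,\mu(w)\,\eta.
\]
Setting $R_i := \sum_{w \in \calX^*}\bigl(e_i\,\mu(w)\,\eta\bigr)\, w$ for $i=1,\dots,n$, where $e_i$ is the $i$-th row of the identity, one sees that $S \trl u = \sum_i (\nu\,\mu(u))_i R_i$, and moreover $R_i \trl x = \sum_j \mu(x)_{ij} R_j$ for every letter $x$. Hence the $A$-module $\sum_i A\,R_i$ is shift-invariant, finitely generated, and contains every right shift of $S$.

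\textbf{Direction (1) $\Rightarrow$ (3).} Let $M$ be a finitely generated shift-invariant $A$-module containing $\{S \trl w\}_{w \in \calX^*}$; passing if necessary to the fraction field of germs, as indicated in the footnote, we may pick an $A$-basis $R_1,\ldots,R_n$. Each right shift $\trl x$ ($x \in \calX$) restricts to an $A$-endomorphism of $M$, whose matrix $\mu(x) \in M_{n,n}(A)$ is read off the expansions $R_i \trl x = \sum_j \mu(x)_{ij} R_j$. Extending multiplicatively yields a monoid morphism $\mu : \calX^* \to M_{n,n}(A)$. Writing $S = \sum_i \nu_i R_i$ and $\eta_j := \scal{R_j}{1_{\calX^*}}$, we compute
\[
\scal{S}{w} = \scal{S \trl w}{1_{\calX^*}} = \sum_{i,j} \nu_i\,\mu(w)_{ij}\,\eta_j = \nu\,\mu(w)\,\eta,
\]
so $(\nu,\mu,\eta)$ is a linear representation of $S$.

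\textbf{Direction (2) $\Rightarrow$ (3).} Proceed by structural induction on rational expressions. Each letter $x \in \calX$ admits an obvious rank-$2$ representation (with $\mu(x)_{12}=1$ and all other matrix entries zero), and the scalar constants give rank-$1$ representations. For the sum $S+T$ one takes the block-diagonal direct sum of the two representations; for the concatenation $S \cdot T$ one uses a block-upper-triangular coupling of the two representations via the rank-one bridge $\eta_S \nu_T$; and for the Kleene star of a proper series $S$ (with $\scal{S}{1_{\calX^*}}=0$) one adds a single new initial/final state, yielding a block matrix that encodes the identity $S^* = 1 + S \cdot S^*$. All these are classical constructions due to Sch\"utzenberger.

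\textbf{Direction (3) $\Rightarrow$ (2).} This is the most technical step and the main obstacle. Set $N := \sum_{x \in \calX} \mu(x)\, x \in M_{n,n}(\ncs{A}{\calX})$; its entries are series of valuation $\geq 1$, so the star is defined entry-wise. The matrix series $\mathbb{N} := \sum_{w \in \calX^*} \mu(w)\, w$ satisfies $\mathbb{N} = I + N\,\mathbb{N}$, whence $\mathbb{N} = N^*$ and $S = \nu\, N^*\, \eta$. It then suffices to show that each entry of $N^*$ is rational, which is obtained by induction on the size $n$ via the classical $2 \times 2$ block-star formula
\[
\begin{pmatrix} P & Q \\ R & T \end{pmatrix}^{\!*}
= \begin{pmatrix} (P + Q T^* R)^* & (P+QT^*R)^* Q T^* \\ (T+RP^*Q)^* R P^* & (T+RP^*Q)^* \end{pmatrix},
\]
combined with the closure of rational series under $\{+,\,{\tt conc},\,*\}$ provided by item (2) itself. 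The subtlety to watch for is that repeated invocations of $(\,\cdot\,)^*$ are legitimate only on proper entries: each intermediate series remains of valuation $\geq 1$ because $N$ does, which makes the induction go through and yields the desired rational expression for $S$.
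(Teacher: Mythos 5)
Since the paper itself gives no proof of Theorem \ref{residual} (it is quoted from the classical references), the comparison is with the standard Sch\"utzenberger-type argument, and that is exactly the route you take: your (3)$\Rightarrow$(1), your (2)$\Rightarrow$(3) by the sum/concatenation/star constructions, and your (3)$\Rightarrow$(2) via $S=\nu N^*\eta$ and the $2\times 2$ block-star induction (with stars applied only to proper entries) are all correct. Two small remarks: in (2)$\Rightarrow$(3) the base case should be a general element $\sum_{x\in\calX}\alpha(x)x$ of $\widehat{A.\calX}$ (possibly infinitely many letters), not only a single letter --- the same two-state representation works; and in (3)$\Rightarrow$(2) the final step $S=\sum_{i,j}\nu_i\eta_j(N^*)_{ij}$ implicitly uses that the rational closure is taken as an $A$-module, which is how the paper understands $\ncs{A^{\mathrm{rat}}}{\calX}$.

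The one genuine flaw is in (1)$\Rightarrow$(3). You ``pick an $A$-basis $R_1,\ldots,R_n$'' of the finitely generated shift-invariant module $M$, ``passing if necessary to the fraction field of germs''. Over the general commutative ring $A$ of the statement a finitely generated module need not be free, so no basis need exist; and if you do pass to $\mathrm{Frac}(A)$ you only obtain $\nu,\mu(x),\eta$ with entries in $\mathrm{Frac}(A)$, i.e.\ a linear representation over the fraction field, which is strictly weaker than assertion (3), which requires $\nu\in M_{1,n}(A)$, $\mu(x)\in M_{n,n}(A)$, $\eta\in M_{n,1}(A)$. (The footnote you invoke addresses a different point, namely the variant condition that the module \emph{generated by} the shifts be finitely generated.) The repair is immediate and keeps everything over $A$: take $R_1,\ldots,R_n$ to be any finite generating family of $M$; shift-invariance gives some, not necessarily unique, expansions $R_i\trl x=\sum_j\mu(x)_{ij}R_j$ with $\mu(x)_{ij}\in A$, and $S=S\trl 1_{\calX^*}\in M$ gives $S=\sum_i\nu_iR_i$ with $\nu_i\in A$; your computation $\scal{S}{w}=\scal{S\trl w}{1_{\calX^*}}=\nu\mu(w)\eta$ only uses the inductive identity $S\trl w=\sum_j(\nu\mu(w))_jR_j$ and never the uniqueness of the coefficients, so it goes through verbatim with generators in place of a basis.
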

A series satisfying one of the conditions of Theorem \ref{residual} is called \textit{rational}.
The set of these series, a $A$-module\footnote{In fact (we will see it) a unital $A$-algebra for
$\conc$ and $\shuffle.$} denoted by ${\ncs{A^{\mathrm{rat}}}\calX}$, isclosed by $\{\conc,+,*\}$.

We also have the following constructions of linear representations (only the last one is new and
the first ones are already treated in \cite{DFLL,jacob}). In particular, the constructions of
$R_1\shuffle R_2$ and $R_1\stuffle R_2$ base on the tensor products (of representations) and use
the expressions of the coproducts given in \eqref{Dshuffle} and \eqref{Dstuffle}, respectively.

\begin{proposition}\label{linearrepresentation}
The module ${\ncs{A^{\mathrm{rat}}}\calX}$ (resp. ${\ncs{A^{\mathrm{rat}}}{Y}}$)
is closed by $\shuffle$ (resp. $\stuffle$). Moreover, for $i=1,2$, let
$R_i\in{\ncs{A^{\mathrm{rat}}}\calX}$ and $(\nu_i,\mu_i,\eta_i)$ be its
representation of dimension $n_i$. Then the linear representation of
\begin{eqnarray*}
R_i^*&\mbox{is}&\biggl(\begin{pmatrix}0&1\end{pmatrix},
\left\{\begin{pmatrix}\mu_i(x)+\eta_i\nu_i\mu_i(x)&0\cr\nu_i\eta_i&0\end{pmatrix}\right\}_{x\in\calX},
\begin{pmatrix}\eta_i\cr1\end{pmatrix}\biggr),\cr
\mbox{that of }R_1+R_2&\mbox{is}&\biggl(\begin{pmatrix}\nu_1&\nu_2\end{pmatrix},
\left\{\begin{pmatrix}\mu_1(x)&{\bf 0}\cr{\bf 0}&\mu_2(x)\end{pmatrix}\right\}_{x\in\calX},
\begin{pmatrix}\eta_1\cr\eta_2\end{pmatrix}\biggr),\cr
\mbox{that of }R_1R_2&\mbox{is}&\biggl(\begin{pmatrix}\nu_1&0\end{pmatrix},
\left\{\begin{pmatrix}\mu_1(x)&\eta_1\nu_2\mu_2(x)\cr0&\mu_2(x)\end{pmatrix}\right\}_{x\in\calX},
\begin{pmatrix}\eta_1\mu_2\eta_2\cr\eta_2\end{pmatrix}\biggr),\cr
\mbox{that of }R_1\shuffle R_2&\mbox{is}&(\nu_1\otimes\nu_2,\{\mu_1(x)\otimes\mathrm{I}_{n_2}
+\mathrm{I}_{n_1}\otimes\mu_2(x)\}_{x\in\calX},\eta_1\otimes\eta_2),\\
\mbox{that of }R_1\stuffle R_2&\mbox{is}&(\nu_1\otimes\nu_2,
\{\mu_1(y_k)\otimes\mathrm{I}_{n_2}+\mathrm{I}_{n_1}\otimes\mu_2(y_k)\cr
&&+\sum_{i+j=k}\mu_1(y_i)\otimes\mu_2(y_j)\}_{k\ge1},\eta_1\otimes\eta_2).
\end{eqnarray*}
\end{proposition}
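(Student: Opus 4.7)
The plan is to verify each claim by direct computation. For every candidate triple $(\nu,\mu,\eta)$, it suffices to check that $\mu$ extends (uniquely) to a monoid morphism $\calX^*\to M_{n,n}(A)$ and that the induced series $\sum_{w\in\calX^*}(\nu\mu(w)\eta)\,w$ coincides with the claimed one. For the three operations $R_i^*$, $R_1+R_2$ and $R_1.R_2$, this is standard block-matrix bookkeeping already present in the cited references \cite{DFLL,jacob}; I would only point out that the Kleene-star construction uses the hypothesis $\scal{R_i}{1_{\calX^*}}=0$, which makes $R_i^*=\sum_{n\ge 0}R_i^n$ meaningful in the relevant topology and explains the enlarged row/column tracking the successive ``returns'' to the start state.

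For the shuffle, the idea is to lift the question to the tensor product via the duality between $\shuffle$ and $\Delta_{\shuffle}$, namely
\[
\scal{R_1\shuffle R_2}{w}=\scal{R_1\otimes R_2}{\Delta_{\shuffle}(w)}.
\]
Since $\Delta_{\shuffle}$ is a conc-morphism with $\Delta_{\shuffle}(x)=x\otimes 1_{\calX^*}+1_{\calX^*}\otimes x$ on each letter, applying the tensor representation $\mu_1\otimes\mu_2$ yields $\mu_1(x)\otimes\mathrm{I}_{n_2}+\mathrm{I}_{n_1}\otimes\mu_2(x)$, which is exactly the proposed matrix. The monoid-morphism property on words follows from the fact that the matrix product of such tensor sums distributes compatibly with the iterated coproduct $\Delta_{\shuffle}^{[n]}$, and pairing with $\nu_1\otimes\nu_2$ on the left and $\eta_1\otimes\eta_2$ on the right recovers $\scal{R_1\shuffle R_2}{w}$ word by word.

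For the stuffle, the only genuinely new construction, I would run the same strategy with $\Delta_{\stuffle}$ in place of $\Delta_{\shuffle}$, using
\[
\scal{R_1\stuffle R_2}{w}=\scal{R_1\otimes R_2}{\Delta_{\stuffle}(w)}.
\]
By \eqref{Dstuffle}, $\Delta_{\stuffle}$ remains a conc-morphism, now carrying on each letter $y_k$ the additional diagonal contribution $\sum_{i+j=k}y_i\otimes y_j$; through $\mu_1\otimes\mu_2$ this produces precisely the matrix $\mu_1(y_k)\otimes\mathrm{I}_{n_2}+\mathrm{I}_{n_1}\otimes\mu_2(y_k)+\sum_{i+j=k}\mu_1(y_i)\otimes\mu_2(y_j)$ of the statement. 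The main point requiring care --- and I expect the principal obstacle --- is to verify that this letter-level assignment extends to a \emph{monoid} morphism $Y^*\to M_{n_1n_2,n_1n_2}(A)$ compatible with $\stuffle$; equivalently, one must match the iterated coproduct $\Delta_{\stuffle}^{[n]}(y_{k_1}\cdots y_{k_n})$, in which the quasi-shuffle rule mixes neighboring letters, with the plain $n$-fold product of the letter matrices above. This compatibility is essentially the bialgebra-level compatibility of the pair $(\stuffle,\Delta_{\stuffle})$ transported to $M_{n_1n_2,n_1n_2}(A)$; once established, the identity $\nu\mu(w)\eta=\scal{R_1\stuffle R_2}{w}$ is immediate and finishes the verification.
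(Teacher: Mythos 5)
Your argument is correct, but note that the paper itself contains no proof of this proposition: it defers the sum, product and star constructions to \cite{DFLL,jacob} and simply asserts the shuffle and stuffle formulas (the stuffle one being flagged as new), so your duality computation is a genuine supplement rather than a parallel of an existing proof. The route you take is the natural one and it works: writing $\scal{R_1\shuffle R_2}{w}=(\nu_1\otimes\nu_2)\,(\mu_1\otimes\mu_2)(\Delta_{\shuffle}(w))\,(\eta_1\otimes\eta_2)$, and likewise with $\Delta_{\stuffle}$, reduces everything to two facts, namely the duality $\scal{u\shuffle v}{w}=\scal{u\otimes v}{\Delta_{\shuffle}(w)}$ (resp. $\scal{u\stuffle v}{w}=\scal{u\otimes v}{\Delta_{\stuffle}(w)}$), where for fixed $w$ only finitely many pairs $(u,v)$ contribute so the coefficientwise computation is legitimate for series, and the multiplicativity of the coproduct for concatenation. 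In particular, the step you single out as the ``principal obstacle'' is not one: any assignment of matrices to letters extends to a monoid morphism on $Y^*$ for free, and the identification of $(\mu_1\otimes\mu_2)(\Delta_{\stuffle}(y_{k_1}\cdots y_{k_n}))$ with the plain product of the letter matrices is exactly the statement, already recorded in the paper right after \eqref{Dstuffle} (with the reference \cite{siblings}), that $\Delta_{\stuffle}$ is a ${\tt conc}$-morphism. What is needed is thus the compatibility of $\Delta_{\stuffle}$ with concatenation (the product of the bialgebra $\calH_{\stuffle}(Y)$), not a compatibility of the pair $(\stuffle,\Delta_{\stuffle})$; your phrasing slightly misidentifies this, though the substance is right.

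Two smaller remarks. Your caveat that the star construction presupposes $\scal{R_i}{1_{\calX^*}}=0$ is correct and worth making explicit, since the proposition as stated omits it (it is forced by Definition \ref{dec1}). Also, deferring the three ``standard'' constructions wholesale to the references glosses over the fact that the star representation as printed has a block of the wrong shape (the lower-left entry $\nu_i\eta_i$ is a scalar where a $1\times n_i$ row, e.g. $\nu_i\mu_i(x)$, is expected); a verification in the spirit of your first paragraph would catch and correct this, so if you intend your proof to certify the displayed formulas rather than just closure under the operations, you should carry out that bookkeeping rather than cite it.
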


\begin{example}[Identity $(-t^2x_0x_1)^*\shuffle(t^2x_0x_1)^*=(-4t^4x_0^2x_1^2)^*$, \cite{words03,orlando}]\label{quiplait}
\begin{eqnarray*}
\begin{tikzpicture}[->,>=stealth',shorten >=1pt,auto,node distance=2cm,
                    semithick,every node/.style={fill=white}]				
  \node[initial,state,accepting] (A)                    {$1$};  
	\node[state]                   (B) [right of=A] {$2$};
  \path (A) edge [bend left] node {$x_0,\mathrm{i}t$} (B)
        (B) edge [bend left] node {$x_1,\mathrm{i}t$}  (A);
\end{tikzpicture}
&&
\begin{tikzpicture}[->,>=stealth',shorten >=1pt,auto,node distance=2cm,
                    semithick,every node/.style={fill=white}]				
  \node[initial,state,accepting] (1)                    {$\mathrm{I}$};  
	\node[state]                   (2) [right of=A] {$\mathrm{II}$};
  \path (A) edge [bend left] node {$x_0,t$} (B)
        (B) edge [bend left] node {$x_1,t$}  (A);
\end{tikzpicture}\cr
(-t^2x_0x_1)^*\leftrightarrow(\nu_2,\{\mu_2(x_0),\mu_2(x_1)\},\eta_2)
&&
(t^2x_0x_1)^*\leftrightarrow(\nu_1,\{\mu_1(x_0),\mu_1(x_1)\},\eta_1).
\end{eqnarray*}
\begin{eqnarray*}
\begin{array}{rlrl}
\nu_1=\begin{pmatrix}1&0\end{pmatrix},
&\mu_1(x_0)=\begin{pmatrix}0&t\cr0&0\end{pmatrix},
&\mu_1(x_1)=\begin{pmatrix}0&0\cr t&0\end{pmatrix},
&\eta_1=\begin{pmatrix}1\cr0\end{pmatrix},\cr
\nu_2=\begin{pmatrix}1&0\end{pmatrix},
&\mu_2(x_0)=\begin{pmatrix}0&\mathrm{i}t\cr0&0\end{pmatrix},
&\mu_2(x_1)=\begin{pmatrix}0&0\cr\mathrm{i}t&0\end{pmatrix},
&\eta_2=\begin{pmatrix}1\cr0\end{pmatrix}.
\end{array}
\end{eqnarray*}

\begin{eqnarray*}
(-t^2x_0x_1)^*\shuffle(t^2x_0x_1)^*&\leftrightarrow&(\nu,\{\mu(x_0),\mu(x_1)\},\eta)\\
&=&(\nu_1\otimes\nu_2,\{\mu_1(x_0)\otimes\mathrm{I}_{n_2}+\mathrm{I}_{n_1}\otimes\mu_2(x_0),\\
&&\mu_1(x_1)\otimes\mathrm{I}_{n_2}+\mathrm{I}_{n_1}\otimes\mu_2(x_1),\eta_1\otimes\eta_2).
\end{eqnarray*}

\begin{eqnarray*}
\begin{tikzpicture}[->,>=stealth',shorten >=1pt,auto,node distance=4cm,
                    semithick,every node/.style={fill=white}]				
	\node[initial,state,accepting] (A)                    {$(1,\mathrm{I})$};
  \node[state]                   (B) [above right of=A] {$(2,\mathrm{I})$};
  \node[state]                   (D) [below right of=A] {$(2,\mathrm{II})$};
  \node[state]                   (C) [below right of=B] {$(1,\mathrm{II})$};
  \path (A) edge [bend left] node {$x_0,\mathrm{i}t$} (B)
            edge [bend left] node {$x_0,t$}           (D)
        (B) edge [bend left] node {$x_0,t$}           (C)
				    edge [bend left] node {$x_1,\mathrm{i}t$} (A)
        (C) edge [bend left] node {$x_1,t$}           (B)
				    edge [bend left] node {$x_1,\mathrm{i}t$} (D)
        (D) edge [bend left] node {$x_1,t$}           (A)
            edge [bend left] node {$x_0,\mathrm{i}t$} (C);
\end{tikzpicture}
\end{eqnarray*}

\begin{eqnarray*}
\nu=\begin{pmatrix}1&0&0&0\end{pmatrix},
&\begin{array}{c}
\mu(x_0)=\begin{pmatrix}
0&0&t&0\cr
0&0&0&t\cr
0&0&0&0\cr
0&0&0&0
\end{pmatrix}
+
\begin{pmatrix}
0&\mathrm{i}t&0&0\cr
0&0&0&0\cr
0&0&0&\mathrm{i}t\cr
0&0&0&0
\end{pmatrix}
=\begin{pmatrix}
0&\mathrm{i}t&t&0\cr
0&0&0&t\cr
0&0&0&\mathrm{i}t\cr
0&0&0&0
\end{pmatrix},\\
\mu(x_1)=\begin{pmatrix}
0&0&0&0\cr
0&0&0&0\cr
t&0&0&0\cr
0&t&0&0
\end{pmatrix}
+
\begin{pmatrix}
0&0&0&0\cr
\mathrm{i}t&0&0&0\cr
0&0&0&0\cr
0&0&\mathrm{i}t&0
\end{pmatrix}
=\begin{pmatrix}
0&0&0&0\cr
\mathrm{i}t&0&0&0\cr
t&0&0&0\cr
0&t&\mathrm{i}t&0
\end{pmatrix},
\end{array}
&\eta=\begin{pmatrix}1\cr0\cr0\cr0\end{pmatrix}.
\end{eqnarray*}
\end{example}

\begin{proposition}
With the notations of Definition \ref{dec1}.\eqref{iii}, if $A$ is a field $K$ then 
\begin{enumerate}
\item[{\rm (a)}] Assertions of Theorem \ref{residual} are equivalent to
\begin{enumerate}
\item[\rm{(iv)}] There exists a finite double family of series
$(G_i,D_i)_{i\in F}$ such that\footnote{See \cite{CM} for a way to obtain
this finite double family of series $(G_i,D_i)_{i\in F}$.}
\begin{eqnarray*}
\Delta_{\conc}S=\sum_{i\in F}G_i\otimes D_i
\end{eqnarray*}
\end{enumerate}
\item[{\rm (b)}] For $S\in\calH_{\shuffle}^{\circ}(\calX)$, since $A$ is a field
then the previous identity is equivalent to
\begin{eqnarray*}
\forall P,Q\in\calH_{\shuffle}(\calX),&&
\scal{S}{PQ}=\sum_{i\in I}\scal{G_i}{P}\scal{D_i}{Q}.
\end{eqnarray*}
\end{enumerate}
Therefore, $(\ncs{{K}^{\mathrm{rat}}}{\calX},\shuffle,1_{\calX^*},\Delta_{\conc},{\tt e})$
(resp. $(\ncs{{K}^{\mathrm{rat}}}{Y},\allowbreak\stuffle,\allowbreak1_{\calX^*},\Delta_{\conc},{\tt e})$)
is the Sweedler's dual of $\calH_{\shuffle}(\calX)$ (resp. $\calH_{\stuffle}(Y)$).
\end{proposition}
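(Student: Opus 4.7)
The plan is to use the fundamental adjunction between deconcatenation and concatenation,
$$\scal{\Delta_{\tt conc}(S)}{u\otimes v}=\scal{S}{uv}=\scal{v\rd S}{u}=\scal{S\rg u}{v}\quad\text{for } u,v\in\calX^*,$$
as the bridge between the shift-finite condition \eqref{i} of Theorem \ref{residual} and the finite-coproduct condition (iv) of part (a). Over the field $K$, condition \eqref{i} is equivalent to the statement that $\Span_K\{v\rd S\}_{v\in\calX^*}$ is a finite-dimensional $K$-subspace of $\ncs{K}{\calX}$.

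For \eqref{i}$\Rightarrow$(iv), I would pick a basis $G_1,\ldots,G_n$ of $\Span_K\{v\rd S\}_{v\in\calX^*}$, write each shift uniquely as $v\rd S=\sum_{i=1}^n c_i(v)G_i$ with $c_i(v)\in K$, and define series $D_i\in\ncs{K}{\calX}$ by $\scal{D_i}{v}:=c_i(v)$. The adjunction then gives $\scal{S}{uv}=\sum_i\scal{G_i}{u}\scal{D_i}{v}$, i.e.\ $\Delta_{\tt conc}(S)=\sum_{i=1}^n G_i\otimes D_i$. Conversely, given a finite decomposition $\Delta_{\tt conc}(S)=\sum_{i\in F}G_i\otimes D_i$, the same adjunction forces
$$v\rd S\;=\;\sum_{i\in F}\scal{D_i}{v}\,G_i,$$
so all left shifts land in the finite-dimensional subspace $\Span_K\{G_i\}_{i\in F}$; closing this subspace under further shifts stays finite-dimensional because the $G_i$ themselves are rational by the same argument applied inductively. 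This last point---that the $G_i,D_i$ can be chosen \emph{rational}, not merely in $\ncs{K}{\calX}$---is in fact the only serious obstacle in the proof, and is the subtlety flagged in the footnote of Definition \ref{dec1}.\eqref{iii} and treated explicitly in \cite{With-PBW,CM}.

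Part (b) is then a direct bilinearity argument: expanding $P=\sum_u\scal{P}{u}u$ and $Q=\sum_v\scal{Q}{v}v$, one has $\scal{S}{PQ}=\sum_{u,v}\scal{P}{u}\scal{Q}{v}\scal{S}{uv}$, and substituting (iv) yields $\scal{S}{PQ}=\sum_{i}\scal{G_i}{P}\scal{D_i}{Q}$; over a field, the converse follows by pairing with tensors of words to recover (iv) coefficient-wise. The concluding identification of $(\ncs{K^{\mathrm{rat}}}{\calX},\shuffle,1_{\calX^*},\Delta_{\tt conc},{\tt e})$ with the Sweedler dual $\calH_{\shuffle}^{\circ}(\calX)$ is then immediate: Definition \ref{dec1}.\eqref{iii} characterizes $\calH_{\shuffle}^{\circ}(\calX)$ exactly by (iv), which by part (a) coincides with $\ncs{K^{\mathrm{rat}}}{\calX}$; closure under $\shuffle$ is supplied by Proposition \ref{linearrepresentation}, and the unit, counit and coassociativity are inherited from the ambient bialgebra structure. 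The $Y$-alphabet case with $\stuffle$ is identical, using the $\stuffle$-clause of Proposition \ref{linearrepresentation} in the last step.
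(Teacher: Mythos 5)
Your proof is correct and is essentially the argument the paper intends: the paper offers no separate proof, presenting the proposition as an immediate consequence of Theorem \ref{residual} and Definition \ref{dec1}.\eqref{iii}, and your adjunction $\scal{S}{uv}=\scal{v\rd S}{u}$ between deconcatenation and shifts is precisely the standard representative-function mechanism behind that assertion; part (b) by bilinearity and the final identification with the Sweedler dual (closure under $\shuffle$, resp.\ $\stuffle$, via Proposition \ref{linearrepresentation}) are also as expected. One repair is needed in the direction (iv) $\Rightarrow$ \eqref{i}: your justification that the finite-dimensional space can be closed under further shifts ``because the $G_i$ themselves are rational by the same argument applied inductively'' is circular as written (rationality of the $G_i$ is neither available at that point nor needed). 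The correct one-line argument is that the span of all shifts $\{v\rd S\}_{v\in\calX^*}$ is itself shift-invariant, since a shift of a shift of $S$ is again a shift of $S$ (footnote to Definition \ref{dec1}), and it is contained in $\Span_K\{G_i\}_{i\in F}$, hence finite-dimensional over the field $K$; this already yields condition \eqref{i} as stated. Your side remark that the $G_i,D_i$ can moreover be chosen rational is indeed the subtlety treated in \cite{With-PBW,CM}, but it is not required for the equivalence as formulated, since (iv) only asks for series.
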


\section{Triangularity, solvability and rationality}\label{TST}
\subsection{Syntactically exchangeable rational series}\label{TST1}

Now, we have to study a special set of series in order to work with the rational series of this class:
a series $S\in\ncs{A}{\calX}$ is called {\it syntactically exchangeable}
if and only if it is constant on multi-homogeneous classes, \textit{i.e.}
\begin{eqnarray}
(\forall u,v\in\calX^*)([(\forall x\in\calX)(|u|_x=|v|_x)]&\Rightarrow&\scal{S}{u}=\scal{S}{v}).
\end{eqnarray}
A series $S\in\ncs{A}{{\cal X}}$ is syntactically exchangeable iff it is of the following form
\begin{eqnarray}\label{cylindric}
S=\sum_{\alpha\in\N^{(\calX)},\mathrm{supp}(\alpha)=\{x_1,\ldots,x_k\}}s_{\alpha},
x_1^{\alpha(x_1)}\shuffle \ldots\shuffle  x_k^{\alpha(x_k)}.
\end{eqnarray}
The set of these series, a shuffle subalgebra of $\ncs{A}{X}$, will be denoted $\ncs{A_{\mathrm{exc}}^{\mathrm{synt}}}{\calX}$ .

When $A$ is a field, the rational and exchangeable series are exactly those who admit a representation
with commuting matrices (at least the minimal one is such, see Theorem \ref{exchangeable} below).
We will take this as a definition as, even for rings, this property implies syntactic exchangeability. 

\begin{definition}
A series $S\in \ncs{A^{\mathrm{rat}}}{\calX}$ will be said \textit{rationally exchangeable} if it admits a representation
$(\nu,\mu,\eta)$ such that $\{\mu(x)\}_{x\in \calX}$ is a set of commuting matrices, the set of these series, a shuffle
subalgebra of $\ncs{A}{X}$, will be denoted $\ncs{A_{\mathrm{exc}}^{\mathrm{rat}}}{\calX}$.
\end{definition}   

\begin{theorem}[See \cite{KSO,CM}]\label{exchangeable}
Let $\ncs{A_{\mathrm{exc}}^{\mathrm{synt}}}{\calX}$ denote the set of (syntactically) exchangeable series. Then
\begin{enumerate}
\item\label{ratex1} In all cases, one has $\ncs{A_{\mathrm{exc}}^{\mathrm{rat}}}{\calX}
\subset\ncs{A^{\mathrm{rat}}}{\calX}\cap\ncs{A_{\mathrm{exc}}^{\mathrm{synt}}}{\calX}$.
The equality holds when $A$ is a field and
\begin{eqnarray*}
\ncs{A_{\mathrm{exc}}^{\mathrm{rat}}}{X}
=&\ncs{A^{\mathrm{rat}}}{x_0}\shuffle\ncs{A^{\mathrm{rat}}}{x_1}&=\shuffle_{x\in X}\ncs{A^{\mathrm{rat}}}{x},\\
\ncs{A_{\mathrm{exc}}^{\mathrm{rat}}}{Y}\cap\ncs{A_{\mathrm{fin}}^{\mathrm{rat}}}{Y}
=&\bigcup\limits_{k\ge0}\ncs{A^{\mathrm{rat}}}{y_1}\shuffle\ldots\shuffle\ncs{A^{\mathrm{rat}}}{y_k}
&\subsetneq\ncs{A_{\mathrm{exc}}^{\mathrm{rat}}}{Y},
\end{eqnarray*}
where $\ncs{A_{\mathrm{fin}}^{\mathrm{rat}}}{Y}=\cup_{F\subset_{finite}Y}\ncs{A^{\mathrm{rat}}}{F}$,
the algebra of series over finite subalphabets\footnote{\label{starofplane}The last inclusion is strict as shows
the example of the following identity \cite{ladji}
\begin{eqnarray*}
(ty_1+t^2y_2+\ldots)^*=\lim\limits_{k\rightarrow+\infty}(ty_1+\ldots+t^ky_k)^*
=\lim\limits_{k\rightarrow+\infty}(ty_1)^*\shuffle\ldots\shuffle(t^ky_k)^*=\shuffle_{k\ge1}(t^ky_k)^*
\end{eqnarray*}
which lives in $\ncs{A_{\mathrm{exc}}^{\mathrm{rat}}}{Y}$ but not in 
$\ncs{A_{\mathrm{exc}}^{\mathrm{rat}}}{Y}\cap \ncs{A_{\mathrm{fin}}^{\mathrm{rat}}}{Y}$.}.
\item\label{Kronecker} (Kronecker's theorem \cite{berstel,zygmund})
One has $\ncs{A^{\mathrm{rat}}}{x}=\{P(1-xQ)^{-1}\}_{P,Q\in A[x]}$ (for $x\in\calX$)
and if $A=K$ is an algebraically closed field of characteristic zero one also has
$\ncs{K^{\mathrm{rat}}}{x}=\span_K\{(ax)^*\shuffle\ncp{K}{x}\vert a\in K\}$.
\item\label{conccharacter} The rational series $(\sum_{x\in \calX}\alpha_x\,x)^*$ are $\conc$-characters and any
$\conc$-character is of this form.
\item\label{indepchargen} Let us suppose that $A$ is without zero divisors and let $(\varphi_i)_{i\in I}$
be a family within $\widehat{A\calX}$ which is $\Z$-linearly independent then, the family 
$\Lyn(\calX)\uplus\{\varphi_i^*\}_{i\in I}$ is algebraically free over $A$ within
$(\ncs{A^{\mathrm{rat}}}{\calX},\shuffle,1_{\calX^*})$. 
\item\label{indepchar} In particular, if $A$ is a ring without zero divisors
$\{x^*\}_{x\in\calX}$ (resp. $\{y^*\}_{y\in Y}$) are algebraically independent over
$(\ncp{A}{\calX},\shuffle,1_{\calX^*})$ (resp. $(\ncp{A}{Y},\stuffle,1_{Y^*})$)
within $(\ncs{A^{\mathrm{rat}}}{\calX},\shuffle,1_{\calX^*})$
(resp. $(\ncs{A^{\mathrm{rat}}}{Y},\stuffle,1_{Y^*})$).
\end{enumerate}
\end{theorem}

\begin{proof}
\begin{enumerate}
\item The inclusion is obvious in view of \eqref{cylindric}.
For the equality, it suffices to prove that, when $A$ is a field,  every rational and
exchangeable series admits a representation with commuting matrices. This is true of
any minimal representation as shows the computation of shifts (see \cite{DR,KSO,CM}).

Now, if $\calX$ is finite, as all matrices commute, we have 
\begin{eqnarray*}
\sum_{w\in \calX^*}\mu(w)w=\biggl(\sum_{x\in\calX}\mu(x)x\biggr)^*=\shuffle_{x\in \calX}(\mu(x)x)^*
\end{eqnarray*}
and the result comes from the fact that $R$ is a linear combination of matrix elements.
As regards the second equality, inclusion $\supset$ is straightforward. We remark that  the union 
$\bigcup_{k\ge1}\ncs{A^{\mathrm{rat}}}{y_1}\shuffle\ldots\shuffle\ncs{A^{\mathrm{rat}}}{y_k}$
is directed as these algebras are nested in one another. With this in view, the reverse inclusion comes from the fact that every
$S\in\ncs{A_{\mathrm{fin}}^{\mathrm{rat}}}{Y}$ is a series over a finite alphabet and the result follows from the first equality.
\item Let $\calA=\{P(1-xQ)^{-1}\}_{P,Q\in A[x]}$. Since $P(1-xQ)^{-1}=P(xQ)^{*}$ then it is obvious that
$\calA\subset \ncs{A^{\mathrm{rat}}}{x}$. Next, it is easy to check that $\calA$ contains $\ncp{A}{x}(=A[x])$
and it is closed by $+,\conc$ as, for instance,
\begin{eqnarray*}
(1-xQ_1)(1-xQ_2)=(1-x(Q_1+Q_2-xQ_1Q_2)).
\end{eqnarray*}
We also have to prove that $\calA$ is closed for $*$. For this to be applied to $P(1-xQ)^{-1}$,
we must suppose that $P(0)=0$ (as, indeed, $\scal{P(1-xQ)^{-1}}{1_{x^*}}=P(0)$) and,
in this case, $P=xP_1$. Now
\begin{eqnarray*}
\Big(\frac{P}{1-xQ}\Big)^*=\Big(1-\frac{P}{1-xQ}\Big)^{-1}=\frac{1-xQ}{1-x(Q+P_1)}\in\calA.
\end{eqnarray*}

\item Let $S=(\sum_{x\in \calX}\alpha_x\,x)^*$ and since $S=1+(\sum_{x\in \calX}\alpha_xx)S$
then $\scal{S}{1_{\calX^*}}=1_A$. If $w=xu$ then we get $\scal{S}{xu}=\alpha_x\scal{S}{u}$.
Thus, by recurrence  on the length, $\scal{S}{x_1\ldots x_k}=\prod_{i=1}^k\alpha_{x_i}$
showing that $S$ is a $\conc$-character. Conversely, by Sch\"utzenberger's reconstruction
lemma saying that, for any series $S$
\begin{eqnarray*}
S=\scal{S}{1_{\calX^*}}.1_A+\sum_{x\in\calX}x.x^{-1}S
\end{eqnarray*}
but, if $S$ is a $\conc$-character, $\scal{S}{1_{\calX^*}}=1$ and $x^{-1}S=\scal{S}{x}S$,
then the previous expression reads $S=1_A+(\sum_{x\in\calX}\scal{S}{x}x)S$. The last 
equality is equivalent to $S=(\sum_{x\in\calX}\scal{S}{x}.x)^*$ proving the claim. 
\item As $(\ncp{A}{\calX},\shuffle,1_{\calX^*})$ and $(\ncp{A}{Y},\stuffle,1_{Y^*})$
are enveloping algebras, this property is an application of the fact that, on an enveloping 
$\mathcal{U}$, the characters are linearly independent w.r.t. to the convolution algebra 
$\mathcal{U}^*_{\infty}$ (see the general construction and proof in \cite{DGM} or \cite{ICP}).
Here, this  convolution algebra ($\mathcal{U}^*_{\infty}$) contains the polynomials
(is equal in case of finite $\calX$). Now, consider a monomial 
\begin{eqnarray*}
(\varphi_{i_1}^*)^{\shuffle\alpha_1}\ldots(\varphi_{i_n}^*)^{\shuffle\alpha_n}=
\biggl(\sum_{k=1}^n\alpha_{i_k}\varphi_{i_k}\biggr)^*
\end{eqnarray*}   
The $\Z$-linear independence of the monomials in $(\varphi_i)_{i\in I}$
implies that all these monomials are linearly independent over $\ncp{A}{\calX}$
which proves  algebraic independence of the family $(\varphi_i)_{i\in I}$.

To end with, the fact that $\Lyn(\calX)\uplus\{\varphi_i^*\}_{i\in I}$ is algebraically free comes 
from Radford theorem $(\ncp{A}{\calX},\shuffle,1_{\calX^*})\simeq A[\Lyn(\calX)]$ and the transitivity 
of polynomial algebras (see \cite{B_Alg1} ch III.2 Proposition 8).
\item Comes directly as an application of the preceding point.
\end{enumerate}
\end{proof}
\begin{remark}(Point \eqref{Kronecker} of Theorem \ref{exchangeable} above)
Kronecker's theorem which can be rephrased in terms of stars as 
$\ncs{A^{\mathrm{rat}}}{x}=\{P(xQ)^*\}_{P,Q\in A[x]}$
holds for every ring and is therefore characteristic free, unlike the 
shuffle version requiring algebraic closure and denominators.
\end{remark}

Now, we are in situation to characterize $\conc$-characters and infinitesimal $\conc$-characters
(see Definition \ref{dec0}) of $(\ncp{A}{\calX},\conc,1_{\calX^*})$.

\begin{corollary}[Kleene stars of the plane]\label{Kleene}
Let $R,L\in\ncs{A^{\mathrm{rat}}}{\calX},\scal{R}{1_{\calX^*}}=1_A,\allowbreak\scal{L}{1_{\calX^*}}=0$,
such that $L^*=R$. The following assertions are equivalent
\begin{enumerate}
\item\label{caracter} $R$ is a $\conc$-character of $(\ncp{A}{\calX},\conc,1_{\calX^*})$.
\item\label{exp_rat} There is a family of coefficients $(c_x)_{x\in\calX}$ such that $R=(\sum_{x\in\calX}c_xx)^*$.
\item\label{lin_rep} The series $R$ admits a linear representation of dimension one\footnote{
The dimension is here (as in \cite{berstel}) the size of the matrices.}.
\item\label{plane} $L$ belongs to the plane ${A.\calX}$.
\item\label{log} $L$ is an infinitesimal $\conc$-character of $(\ncp{A}{\calX},\conc,1_{X^*})$.
\end{enumerate}
Moreover, $(\alpha_0x_0+\alpha_1x_1)^*\shuffle(\beta_0x_0+\beta_1x_1)^*=
((\alpha_0+\beta_0)x_0+(\alpha_1+\beta_1)x_1)^*$ and\footnote{In particular, 
$(a_sy_s)^*\stuffle(a_ry_r)^*=(a_sy_s+a_ry_r+a_sa_ry_{s+r})^*$
and $(a_sy_s)^*\stuffle(-a_sy_s)^*=(-a_s^2y_{2s})^*$.}
\begin{eqnarray*}
\biggl(\sum_{s\ge1}a_sy_s\biggr)^*\stuffle\biggl(\sum_{s\ge1}b_sy_s\biggr)^*
=\biggl(\sum_{s\ge1}(a_s+b_s)y_s+\sum_{r,s\ge1}a_sb_ry_{s+r}\biggr)^*,
\end{eqnarray*}
where, for any $i=0,1$ and $s\ge1$, $\alpha_i,\beta_i,a_s,b_s\in\C$.
\end{corollary}

\begin{proof}
\underline{\eqref{caracter} $\Leftrightarrow$ \eqref{exp_rat}:}
This corresponds to the point \eqref{conccharacter} of Theorem \ref{exchangeable} above.

\underline{\eqref{exp_rat} $\Leftrightarrow$ \eqref{lin_rep}}
This is a direct consequence of Theorem \ref{residual}.

\underline{\eqref{exp_rat} $\Leftrightarrow$ \eqref{plane}:}
This is obvious, by construction (in which $L$ is viewed as the
$\shuffle$-logarithm of $R$). Indeed, doing as in Note \ref{starofplane}, one has
\begin{eqnarray*}
R=\Big(\sum_{x\in\calX}c_xx\Big)^*=\shuffle_{x\in\calX}(c_xx)^*=
\shuffle_{x\in\calX}\exp_{\shuffle}(c_xx)=\exp_{\shuffle}\Big(\sum_{x\in\calX}c_xx\Big).
\end{eqnarray*}

\underline{\eqref{plane} $\Leftrightarrow$ \eqref{log}:}
If $L$ is an infinitesimal character then, by Definition \ref{dec0}, $\scal{L}{uv}=
\scal{L}{u}\scal{v}{1_{\calX^*}}+\scal{u}{1_{\calX^*}}\scal{L}{v}$, for $u,v\in\calX^*$. 
Hence, for $w=uv\in\calX^{\ge2}$ with $u,v\in\calX^+$, one gets $\scal{L}{w}=\scal{L}{uv}=0$.
In addition, for $u=v=1_{\calX^*}$, one also gets $\scal{L}{1_{\calX^*}}=0$ and it follows that
$L=\sum_{x\in\calX}\scal{L}{x}x$. Conversely, since for any $u,v\in\calX^*$ and $x\in\calX$,
one has $\scal{uv}{x}=\scal{u}{x}\scal{v}{1_{\calX^*}}+\scal{u}{1_{\calX^*}}\scal{v}{x}=0$
then, by the pairing in \eqref{pairing}, one deduces that
\begin{eqnarray*}
\scal{L}{uv}=\sum_{x\in\calX}\scal{L}{x}\scal{uv}{x}=0
\end{eqnarray*}
meaning that $L$ is an infinitesimal $\conc$-character.

To end, letting $x_i\in X$ and using \eqref{Dshuffle}, one has
\begin{eqnarray*}
&&\scal{(\alpha_0x_0+\alpha_1x_1)^*\shuffle(\beta_0x_0+\beta_1x_1)^*}{x_i}\cr
&&=\scal{(\alpha_0x_0+\alpha_1x_1)^*\otimes(\beta_0x_0+\beta_1x_1)^*}{\Delta_{\shuffle}x_i}\cr
&&=\scal{(\alpha_0x_0+\alpha_1x_1)^*\otimes(\beta_0x_0+\beta_1x_1)^*}{x_i\otimes1_{X^*}+1_{X^*}\otimes x_i}\cr
&&=\alpha_i+\beta_i\cr
&&=\scal{((\alpha_0+\beta_0)x_0+(\alpha_1+\beta_1)x_1)^*}{x_i}.
\end{eqnarray*}
Similarly, letting $y_t\in Y$ and using \eqref{Dstuffle}, one also has
\begin{eqnarray*}
&&\scal{\Bigl(\sum_{s\ge1}a_sy_s\Bigr)^*\stuffle\Bigl(\sum_{s\ge1}b_sy_s\Bigr)^*}{y_t}\cr
&&=\scal{\Bigl(\sum_{s\ge1}a_sy_s\Bigr)^*\otimes\Bigl(\sum_{s\ge1}b_sy_s\Bigr)^*}{\Delta_{\stuffle}y_t}\cr
&&=\scal{\Bigl(\sum_{s\ge1}a_sy_s\Bigr)^*\otimes\Bigl(\sum_{s\ge1}b_sy_s\Bigr)^*}
{y_t\otimes1_{Y^*}+1_{Y^*}\otimes y_t+\sum_{r,s\ge1,r+s=t}y_s\otimes y_r}\cr
&&=a_t+b_t+\sum_{r,s\ge1,r+s=t}a_sb_r\cr
&&=\scal{\Bigl(\sum_{s\ge1}(a_s+b_s)y_s+\sum_{r,s\ge1}a_sb_ry_{s+r}\Bigr)^*}{y_t}.
\end{eqnarray*}
\end{proof}

\begin{example}[Identity $(-t^2y_2)^*\stuffle(t^2y_2)^*=(-4t^4y_4)^*$, \cite{words03,orlando}]\\
$\begin{array}{c}
\begin{tikzpicture}[->,>=stealth',shorten >=1pt,auto,node distance=4cm,
                    semithick,every node/.style={fill=white}]				
  \node[initial,state,accepting] (A)              {$1$};  
	\node[initial,state,accepting] (B) [right of=A] {$2$};
	\node[initial,state,accepting] (C) [right of=B] {$3$};
  \path (A) edge [loop,above] node {$y_2,-t^2$} (A)
        (B) edge [loop,above] node {$y_2, t^2$} (B)
        (C) edge [loop,above] node {$y_4,-t^4$} (C);
\end{tikzpicture}\cr
\begin{array}{r}(-t^2y_2)^*\leftrightarrow(\nu_2,\mu_2(y_2),\eta_2)\phantom{,}\\[-5pt]=(1,-t^2,1),\end{array}
\begin{array}{r}( t^2y_2)^*\leftrightarrow(\nu_1,\mu_1(y_2),\eta_1)\phantom{,}\\[-5pt]=(1,t^2,1),\end{array}
\begin{array}{r}(-t^4y_4)^*\leftrightarrow(\nu,\mu(y_4),\eta)\phantom{.}\\[-5pt]=(1,-t^4,1).\end{array}
\end{array}$
\end{example}

\begin{remark}\label{reetheorem}
In Corollary \ref{Kleene}, if $A=K$ being a field, the points \eqref{caracter} and \eqref{log}
can be rephrased as, respectively, ``$R$ is a group like element" and ``$L$ is a primitive element"
of ${\ncs{K^{\mathrm{rat}}}\calX}$, for $\Delta_{\conc}$.
Indeed, in \eqref{D1}--\eqref{D3}, if $S\in\ncs{K}{Y}$ (resp. $\ncs{K}{\calX}$ is a
$\stuffle$ (resp. $\shuffle,\conc$)-character of  $(\ncp{K}{Y},\conc,1_{Y^*})$
(resp. $(\ncp{K}{\calX},\conc,1_{\calX^*})$ then
\begin{enumerate}
\item Using the fact that $S\otimes S=\sum_{u,v\in\calX^*}\scal{S}{u}\scal{S}{v}u\otimes v$, one has
$\Delta_{\stuffle}S=S\otimes S$ (resp. $\Delta_{\shuffle}S=S\otimes S,\Delta_{\conc}S=S\otimes S$)
meaning that $S$ is group like, for $\Delta_{\stuffle}$ (resp. $\Delta_{\shuffle},\Delta_{\conc}$).
\item Using the fact that $\Delta_{\stuffle}$ (resp. $\Delta_{\shuffle},\Delta_{\conc}$)
and the maps $T\longmapsto T\otimes1_{Y^*},T\longmapsto1_{Y^*}\otimes T$
(resp. $T\longmapsto T\otimes1_{\calX^*},T\longmapsto1_{\calX^*}\otimes T$)
are continuous homomorphisms, one has\footnote{Here, $\log S\otimes1_{Y^*}$ and $1_{Y^*}\otimes\log S$
(resp. $\log S\otimes1_{\calX^*}$ and $1_{\calX^*}\otimes\log S$) commute.}
$\Delta_{\stuffle}\log S=\log S\otimes1_{\calX^*}+1_{\calX^*}\otimes\log S$
(resp. $\Delta_{\shuffle}\log S=\log S\otimes1_{\calX^*}+1_{\calX^*}\otimes\log S,
\Delta_{\conc}\log S=\log S\otimes1_{\calX^*}+1_{\calX^*}\otimes\log S$)
meaning that $\log S$ is primitive, for $\Delta_{\stuffle}$ 
(resp. $\Delta_{\shuffle},\Delta_{\conc}$).
\end{enumerate}
Hence, for $\Delta_{\stuffle},\Delta_{\shuffle}$ and $\Delta_{\conc}$, $S$ is
group like iff $\log S$ is primitive meaning that the equivalence, between
\eqref{caracter} and \eqref{log}, is an extension of the Ree's theorem\footnote{
This theorem was first established for shuffle algebra \cite{ree} and
then was adapted to quasi-shuffle algebra \cite{acta,VJM}.} \cite{ree}.
\end{remark}

\subsection{Exchangeable rational series and their linear representations}\label{TST2}
As examples, one can consider the following forms $(F_0),(F_1)$ and $(F_2)$ of rational
series in $\serie{A^{\mathrm{rat}}}{X}$ \cite{IMACS,CM} and examine their linear representations:
\begin{eqnarray*}
(F_0)&E_1x_{i_1}\ldots E_jx_{i_j}E_{j+1},\mbox{ where }x_{i_1},\ldots,x_{i_j}\in X,E_1,\ldots,E_j\in\serie{A^{\mathrm{rat}}}{x_0},\\
(F_1)&E_1x_{i_1}\ldots E_jx_{i_j}E_{j+1},\mbox{ where }x_{i_1},\ldots,x_{i_j}\in X,E_1,\ldots,E_j\in\serie{A^{\mathrm{rat}}}{x_1},\\
(F_2)&E_1x_{i_1}\ldots E_jx_{i_j}E_{j+1},\mbox{ where }x_{i_1},\ldots,x_{i_j}\in X,E_1,\ldots,E_j\in\serie{A_{\mathrm{exc}}^{\mathrm{rat}}}{X}.
\end{eqnarray*}

\begin{theorem}[Triangular sub bialgebras of $(\ncs{A^{\mathrm{rat}}}{\calX},\shuffle,1_{X^*},\Delta_{\conc},{\tt e})$, \cite{CM}]\label{Subalgebras}
Let $\rho=(\nu,\mu,\eta)$ a representation of $R\in\ncs{A^{\mathrm{rat}}}{\calX}$. Then
\begin{enumerate}
\item\label{comm1} If the matrices $\{\mu(x)\}_{x\in \calX}$ commute between themselves
and if the alphabet is finite, every rational exchangeable series decomposes as 
\begin{eqnarray*}
R=\sum_{i=1}^n \shuffle_{x\in \calX}R_x^{(i)}&\mbox{with}&R_x^{(i)}\in\ncs{A^{\mathrm{rat}}}{x}.
\end{eqnarray*}
\item If $\cal L$ consists of upper-triangular matrices then 
$R\in{\ncs{A_{\mathrm{exc}}^{\mathrm{rat}}}{\calX}}\shuffle\ncp{A}{\calX}$.
\item For any $x\in\calX$, letting $M(x):=\mu(x)x$ and then extending,
in the obvious way, this representation to $\ncs{A}{\calX}$ by 
$M(S)=\sum_{w\in\calX^*}\scal{S}{w}\mu(w)w$, we have
\begin{eqnarray*}
R=\nu{M(\calX^*)}\eta.
\end{eqnarray*}
Moreover, we have
\begin{enumerate}
\item If $\{\mu(x)\}_{x\in\calX}$ are upper-triangular then
$M(\calX)=D(\calX)+N(\calX)$, where $D(\calX)$
and $N(\calX)$ are diagonal and strictly upper-triangular
letter matrices, respectively, such that\footnote{
by Lazard factorization \cite{lothaire,viennotgerard}.}
$M(\calX^*)=((D(\calX^*)N(\calX))^*D(\calX^*))$.
\item We get\footnote{\it idem.} (for $\calX=X$)
\begin{eqnarray*}
M((x_0+x_1)^*)=(M(x_1^*)M(x_0))^*M(x_1^*)=(M(x_0^*)M(x_1))^*M(x_0^*)
\end{eqnarray*}  
and the modules generated by the families $(F_0),(F_1)$ and $(F_2)$
are closed by $\conc,\shuffle$ (and coproducts if $A=K$ is a field).
From this,  it follows that $R$ is a linear combination of expressions
in the form $(F_0)$ (resp. $(F_1)$) if $M(x_1^*)M(x_0)$
(resp. $M(x_0^*)M(x_1)$) is strictly upper-triangular. 
\item If $A$ is a $\Q$-algebra then
\begin{eqnarray*}
M(\calX^*)=\prod_{l\in\Lyn\calX}^{\searrow}e^{S_l\mu(P_ l)}.
\end{eqnarray*}
\end{enumerate}
\end{enumerate}
\end{theorem}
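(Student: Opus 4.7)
My plan is to prove the three parts in order, using as recurring tools the matrix Kleene-star identity $\sum_{w \in \calX^*} \mu(w) w = \bigl(\sum_{x \in \calX} \mu(x) x\bigr)^*$, the scalar factorisation $\bigl(\sum_x \alpha_x x\bigr)^* = \shuffle_{x \in \calX}(\alpha_x x)^*$ from Proposition \ref{Kleene}, Sch\"utzenberger's star identity $(A + B)^* = (A^* B)^* A^*$, and the MSR factorisation \eqref{diagonalX}.

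For (1), I would start from $R = \nu M(\calX^*) \eta$. For each letter $x$, the sequence $\mu(x)^k$ satisfies a linear recurrence by Cayley--Hamilton, so each entry of $(\mu(x) x)^*$ lies in $\ncs{A^{\mathrm{rat}}}{x}$ by Kronecker's theorem (Theorem \ref{exchangeable}.\ref{Kronecker}). The commutation hypothesis enters when assembling these pieces: simultaneously triangularising the commuting family $\{\mu(x)\}_{x \in \calX}$ over a splitting field and then descending the resulting identity to $A$ using rationality of the series involved, each entry $\mu(w)_{pq}$ expresses as a finite $A$-linear combination of Parikh monomials $\prod_{x \in \calX} P_{i,x}(|w|_x) \lambda_{i,x}^{|w|_x}$; summing over $w$ and applying the scalar star factorisation converts each summand into a shuffle $\shuffle_{x \in \calX} R_x^{(i)}$ with $R_x^{(i)} \in \ncs{A^{\mathrm{rat}}}{x}$.

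For (2), I decompose $\mu(x) = D(x) + N(x)$ into its diagonal and strictly upper-triangular parts. Sch\"utzenberger's identity yields $M(\calX^*) = (D(\calX^*) N(\calX))^* D(\calX^*)$; since $D(\calX^*) N(\calX)$ is strictly upper-triangular as a matrix of series, its star terminates after at most $n - 1$ steps and contributes a factor in $\ncp{A}{\calX}$, while $D(\calX^*)$ is exchangeable rational by part (1) applied to the diagonal family (which commutes automatically). Pairing with $\nu$ and $\eta$ and using that shuffling with a polynomial preserves the exchangeable-times-polynomial form yields $R \in \ncs{A_{\mathrm{exc}}^{\mathrm{rat}}}{\calX} \shuffle \ncp{A}{\calX}$.

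Part (3) is largely bookkeeping: the identity $R = \nu M(\calX^*) \eta$ unpacks the definition of the representation through the Kleene-star identity; subpart (a) is Sch\"utzenberger's identity $(D + N)^* = (D^* N)^* D^*$ in matrix form; subpart (b) specialises (a) to $\calX = X$ and invokes Proposition \ref{linearrepresentation} to verify closure of the families $(F_0), (F_1), (F_2)$ under ${\tt conc}$ and $\shuffle$, with the strict upper-triangularity of $M(x_1^*) M(x_0)$ (resp.\ $M(x_0^*) M(x_1)$) ensuring the outer star collapses to a finite linear combination of the claimed form; subpart (c) is the image under the monoid morphism $x \mapsto \mu(x) x$ of the MSR factorisation \eqref{diagonalX}. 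The main obstacle I foresee is the descent step in (1): simultaneous triangularisation is comfortable over an algebraically closed field but delicate over an arbitrary commutative ring $A$. The footnote to Theorem \ref{residual} suggests localising to germs when $A$ is a domain, which suffices in the intended application; more generally, one can bypass triangularisation by showing directly that each entry of $\mu(w)$ is polynomial-exponential in the Parikh vector of $w$ and then reassembling via Proposition \ref{Kleene}.
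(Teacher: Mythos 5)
The paper itself does not reprove Theorem \ref{Subalgebras} (it is quoted from \cite{CM}), but the mechanisms it relies on are visible in the proofs of Theorem \ref{exchangeable}, Lemma \ref{lemma} and Theorem \ref{Subbialgebras}, and measured against these your sketch has one genuine gap and one needless detour. The gap is in part (2). After writing $\mu(x)=D(x)+N(x)$ and invoking $(D(\calX^*)N(\calX))^*D(\calX^*)$, you assert that the truncated star ``contributes a factor in $\ncp{A}{\calX}$'' and that shuffling with a polynomial then gives $R\in\ncs{A_{\mathrm{exc}}^{\mathrm{rat}}}{\calX}\shuffle\ncp{A}{\calX}$. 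Neither step is correct: the entries of $(D(\calX^*)N(\calX))^jD(\calX^*)$ are concatenation products $E_1x_{i_1}E_2\cdots E_jx_{i_j}E_{j+1}$ with exchangeable rational factors $E_i$ (that is, the form $(F_2)$ of the solvable case), not polynomials, and the products occurring there are concatenations, not shuffles, so the final assembly step has nothing to act on. Such concatenations are in general \emph{not} in $\ncs{A_{\mathrm{exc}}^{\mathrm{rat}}}{\calX}\shuffle\ncp{A}{\calX}$: the remark following Theorem \ref{Subbialgebras} exhibits $a^*b(-a)^*$, which admits exactly an upper-triangular representation (diagonal $\mu(a)$, strictly upper-triangular $\mu(b)$) and is not of that form. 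What actually yields the conclusion of (2) is the scalar-diagonal structure used in Lemma \ref{lemma} and in the nilpotent case of Theorem \ref{Subbialgebras}: blockwise one needs $\mu(x)=c(x)I_r+N(x)$, so that the diagonal contribution is the single conc-character $(\sum_{x\in\calX}c(x)x)^*$, which can then be split off as a shuffle factor against a genuinely polynomial part. Your argument never produces this scalar diagonal, and with an arbitrary diagonal the claimed decomposition fails, so (2) is not proved.

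For part (1) your route (simultaneous triangularisation over a splitting field, polynomial-exponential entries in the Parikh vector, then descent to $A$) requires hypotheses the statement does not grant — you flag the descent problem yourself — and it is unnecessary. The argument the paper uses for the corresponding point of Theorem \ref{exchangeable} works over any commutative ring: commutation gives $\sum_{w\in\calX^*}\mu(w)w=(\sum_{x\in\calX}\mu(x)x)^*=\shuffle_{x\in\calX}(\mu(x)x)^*$ (coefficientwise this is just $\mu(w)=\prod_{x\in\calX}\mu(x)^{|w|_x}$, each word of a given multidegree occurring exactly once in the shuffle of the powers $x^{|w|_x}$ over distinct letters), and then $R=\nu\bigl(\shuffle_{x\in\calX}(\mu(x)x)^*\bigr)\eta$ is a finite sum of shuffles of entries of the $(\mu(x)x)^*$, each entry lying in $\ncs{A^{\mathrm{rat}}}{x}$; no eigenvalues, Cayley--Hamilton or descent is needed. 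Your treatment of part (3) is essentially the intended bookkeeping (Kleene-star unpacking, the Lazard--Sch\"utzenberger identity, closure of $(F_0),(F_1),(F_2)$ via Proposition \ref{linearrepresentation}, and \eqref{diagonalX} pushed through $u\otimes v\mapsto\mu(v)u$), provided you make explicit that in (c) the product on the series side is the shuffle.
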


\begin{remark}\label{contrex1}
\begin{enumerate}
\item The point \eqref{comm1} of Theorem \ref{Subalgebras} is no longer true for an infinite
alphabet as shows the example of the series $S=\sum_{k\ge1}y_k$ in $\ncs{A^{\mathrm{rat}}}{Y}$.
\item On a general ring it can happen that $R$ is exchangeable, $\rho$ minimal and nevertheless 
${\cal L}$ is noncommutative, as shows the case of $A=\Q[x,t]/t^3\Q[x,t]$ and
\begin{eqnarray*}
X=\{a,b\},\
\mu(a)=t\begin{pmatrix}1&0\cr x&1\end{pmatrix},\
\mu(b)=t\begin{pmatrix}1& x\cr 0&1\end{pmatrix},\
\nu=\begin{pmatrix}1&1\end{pmatrix},\
\eta=\begin{pmatrix}1\cr1\end{pmatrix}.
\end{eqnarray*}
With these data, $R=2+(xt+2t)(a+b)+(x^2t^2+2xt^2+2t^2)(ab+ba)$
which is an exchangeable polynomial but
\begin{eqnarray*}
\mu(a)\mu(b)=\begin{pmatrix}t^2&xt^2\cr xt^2&x^2t^2+t^2\end{pmatrix},
&\mu(b)\mu(a)=\begin{pmatrix}x^2t^2+t^2& x t^2\cr xt^2&t^2\end{pmatrix}.
\end{eqnarray*}
Now the representation is minimal because if it were of dimension $1$, $\frac{1}{2}R$ would be a 
conc-character, which is not the case. Otherwise, if it were of dimension $0$, $R$ would be zero. 
\end{enumerate}
\end{remark}

In order to establish Theorem \ref{Subbialgebras} below, we will use the following
\begin{lemma}\label{lemma}
Let $(\nu,\tau,\eta)$ a representation of $S$ of dimension $r$ such that, for all 
$x\in \calX$, ($\tau(x)-c(x)I_r$) is strictly upper triangular, then 
$S\in \ncs{K_{\mathrm{exc}}^{\mathrm{rat}}}{\calX}\shuffle\ncp{K}{\calX}$.
\end{lemma}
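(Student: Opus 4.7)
The plan is to decompose $\tau(x) = c(x)I_r + N(x)$ with $N(x)$ strictly upper triangular, and exhibit explicitly a rationally exchangeable series $E$ and a polynomial $P$ such that $S = E \shuffle P$. The natural candidate is $E := \bigl(\sum_{x \in \calX} c(x)\,x\bigr)^*$: by Proposition \ref{Kleene} this is a \texttt{conc}-character lying in $\ncs{K_{\mathrm{exc}}^{\mathrm{rat}}}{\calX}$, and $\scal{E}{u} = \prod_i c(u_i)$ for every word $u = u_1\cdots u_k$.

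First I would expand, for any word $w = w_1\cdots w_n$,
\[
\tau(w) = \prod_{i=1}^n \bigl(c(w_i)I_r + N(w_i)\bigr) = \sum_{J \subseteq \{1,\ldots,n\}} \Bigl(\prod_{i \notin J} c(w_i)\Bigr)\, N_J(w),
\]
where $N_J(w) := \prod^{\nearrow}_{j \in J} N(w_j)$ is the ordered product of the nilpotent parts along $J$; this uses that each central factor $c(w_i)I_r$ commutes with everything. Because $N(x)$ is strictly upper triangular of size $r$, $N_J(w)$ vanishes as soon as $|J|\geq r$, so only $J$ with $|J| < r$ contribute. This motivates the candidate
\[
P := \sum_{k=0}^{r-1} \sum_{v \in \calX^k} \bigl(\nu\, N(v_1) \cdots N(v_k)\, \eta\bigr)\, v,
\]
which lies in $\ncp{K}{\calX}$ under the mild assumption that $\{N(x)\}_{x \in \calX}$ has finite support (automatic when $\calX$ is finite, and harmless in general since letters with $N(x) = 0$ contribute to $S$ only through the factor $E$).

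The decisive step is the coefficient-wise identity $\scal{E \shuffle P}{w} = \nu \tau(w) \eta$. Using the character property of $E$ and the combinatorial expansion of the shuffle,
\[
\scal{E \shuffle P}{w} = \sum_{k=0}^{r-1}\sum_{v \in \calX^k} \bigl(\nu N(v)\eta\bigr) \sum_{\iota}\prod_{i \notin \mathrm{Im}(\iota)} c(w_i),
\]
where $\iota : \{1,\ldots,k\} \hookrightarrow \{1,\ldots,n\}$ ranges over strictly increasing embeddings with $w_{\iota(j)} = v_j$ for all $j$. Reindexing each pair $(v,\iota)$ by $J := \mathrm{Im}(\iota)$ (with $v = w|_J$ then determined) turns this double sum into exactly the expansion of $\nu\tau(w)\eta$ derived above, hence $S = E \shuffle P$. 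The main obstacle is precisely this bookkeeping identification: matching shuffle embeddings of $v$ into $w$ with subsets of positions in $w$, and recognizing via Proposition \ref{Kleene} that the scalar factor $\prod_{i\notin J} c(w_i)$ is exactly the coefficient of the complementary subword $w|_{\{1,\ldots,n\}\setminus J}$ in $E$.
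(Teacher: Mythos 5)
Your proof is correct, and it rests on exactly the same decomposition as the paper's: you factor $S$ as the shuffle of the character $E=(\sum_{x\in\calX}c(x)\,x)^*$ (the paper's $S_2$) with the series carried by the nilpotent parts — your $P$ is precisely the paper's $S_1$, namely the series represented by $(\nu,\,x\mapsto\tau(x)-c(x)I_r,\,\eta)$, written out explicitly and supported in lengths $<r$ by strict upper-triangularity. Where you differ is in how the identity $S=E\shuffle P$ is verified. The paper argues at the level of linear representations: by Proposition \ref{linearrepresentation}, $S_1\shuffle S_2$ is represented by $\bigl(\nu\otimes e_1,\;(\tau(x)-c(x)I_r)\otimes I_r+I_r\otimes c(x)I_r,\;\eta\otimes e_1^*\bigr)$, and since $I_r\otimes c(x)I_r=c(x)\,I_r\otimes I_r$ this collapses to $\bigl(\nu\otimes e_1,\;\tau(x)\otimes I_r,\;\eta\otimes e_1^*\bigr)$, which visibly has the same coefficients as $(\nu,\tau,\eta)$; no shuffle combinatorics is needed. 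You instead check the coefficient identity $\scal{E\shuffle P}{w}=\nu\tau(w)\eta$ directly, expanding $\tau(w)=\prod_i\bigl(c(w_i)I_r+N(w_i)\bigr)$ over subsets $J$ of positions (using centrality of the scalar parts) and matching this with the subset description of shuffle coefficients; that bookkeeping is sound, and it buys a self-contained, elementary verification at the price of redoing by hand what the tensor construction of shuffle representations packages once and for all. One shared caveat: your finiteness proviso on $\{N(x)\}_{x\in\calX}$, needed so that $P$ is genuinely a polynomial when $\calX$ is infinite, is not a gap relative to the paper, which asserts $S_1\in\ncp{K}{\calX}$ under the same implicit hypothesis.
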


\begin{proof} Let $(e_i)_{1\le i\leq r}$ be the canonical basis of $K^{1\times r}$.
We construct the representations $\rho_1=(\nu,(x\longmapsto\tau(x)-c(x)I_r),\eta)$, 
$\rho_2=(e_1,(x\longmapsto c(x)I_r),e_1^*)$ of $S_1$ and $S_2$ and remark that
$S_1\shuffle S_2$ admits the representation 
\begin{eqnarray*}
\rho_3=(\nu\otimes e_1,((\tau(x)-c(x)I_r)\otimes I_r+I_r\otimes c(x)I_r)_{x\in \calX},
\eta \otimes e_1^*)
\end{eqnarray*}
as $I_r\otimes c(x)I_r=c(x)I_r\otimes I_r$, $\rho_3$ is, in fact, 
$(\nu\otimes e_1,(\tau(x)\otimes I_r)_{x\in \calX},\eta\otimes e_1^*)$
which represents $S$, the result now comes from the fact that $S_1\in\ncp{K}{\calX}$
and $S_2=(\sum_{x\in \calX}c(x)x)^*\in\ncs{K_{\mathrm{exc}}^{\mathrm{rat}}}{\calX}$.
\end{proof}   

We first begin by properties essentially true over algebraically closed fields. 

\begin{theorem}[Triangular sub bialgebras of 
$(\ncs{K^{\mathrm{rat}}}{\calX},\shuffle,1_{X^*},\Delta_{\conc},{\tt e})$, \cite{CM}]\label{Subbialgebras}
We suppose that $K$ is an algebraically closed field and that $\rho=(\nu,\mu,\eta)$
is a linear representation of $R\in\ncs{K^{\mathrm{rat}}}{\calX}$ of minimal dimension $n$,
we note $\calL=\calL(\mu)\subset K^{n\times n}$ the Lie algebra generated by the matrices 
$(\mu(x))_{x\in\calX}$. Then 
\begin{enumerate}
\item\label{CommRep} $\calL$ is commutative iff $R\in \ncs{K_{\mathrm{exc}}^{\mathrm{rat}}}{\calX}$,
\item $\cal L$ is nilpotent iff 
$R\in{\ncs{K_{\mathrm{exc}}^{\mathrm{rat}}}{\calX}}\shuffle\ncp{K}{\calX}$,
\item $\cal L$ is solvable iff $R$ is a linear combination of expressions in the form $(F_2)$.
\end{enumerate}

Moreover, denoting $\ncs{K_{\mathrm{nil}}^{\mathrm{rat}}}{\calX}$ (resp. $\ncs{K_{\mathrm{sol}}^{\mathrm{rat}}}{\calX}$),
the set of rational series such that  $\calL(\mu)$ is nilpotent (resp. solvable), we get a tower of sub Hopf algebras of
the Sweedler's dual, $\ncs{K_{\mathrm{nil}}^{\mathrm{rat}}}{\calX}\subset\ncs{K_{\mathrm{sol}}^{\mathrm{rat}}}{\calX}
\subset\calH_{\shuffle}^{\circ}(\calX)$.
\end{theorem}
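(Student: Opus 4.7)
My plan is to reduce each equivalence to a structural property of the minimal linear representation $\rho=(\nu,\mu,\eta)$ of $R$ on $K^n$. Two general facts will be used throughout. First, over an algebraically closed field $K$ the minimal representation of a rational series is unique up to similarity and appears as a subquotient of any linear representation of $R$; consequently, if some representation has commuting (resp. nilpotent, resp. solvable) image Lie algebra, so does $\mu$. Second, the tensor and block-triangular constructions for $\shuffle$, ${\tt conc}$ and $\Delta_{{\tt conc}}$ given by Proposition \ref{linearrepresentation} allow the converse direction: from structural data about $\calL$ one reconstructs $R$ out of explicit building blocks.

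For equivalence (1), the direction $\Leftarrow$ is the definition of $\ncs{K_{\mathrm{exc}}^{\mathrm{rat}}}{\calX}$ combined with the subquotient remark, which transports commutativity to the minimal representation; $\Rightarrow$ is immediate. For (3), Lie's theorem (which needs $K$ algebraically closed of characteristic zero) provides a basis in which every $\mu(x)$ is upper triangular; Theorem \ref{Subalgebras} part~3 then applies and yields $M(\calX^*)=(D(\calX^*)N(\calX))^*D(\calX^*)$, with $D$ the diagonal part and $N$ strictly upper triangular. Since each entry of $D(\calX^*)$ is a ${\tt conc}$-character (Theorem \ref{exchangeable}, point~3), it lies in $\ncs{K_{\mathrm{exc}}^{\mathrm{rat}}}{\calX}$; expanding the outer Kleene star produces finite linear combinations of expressions of type $(F_2)$. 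Conversely, Proposition \ref{linearrepresentation} assembles a block upper triangular representation out of the ingredients of $(F_2)$, whose Lie algebra is manifestly solvable, and the subquotient remark transports this to $\calL$.

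The central case is nilpotency. For $\Rightarrow$, I would apply Engel's theorem in its weight-space refinement: a nilpotent Lie subalgebra of $\mathfrak{gl}_n(K)$ preserves a generalized weight decomposition $K^n=\bigoplus_\lambda V_\lambda$ in which, after choosing a suitable basis, $\mu(x)$ becomes block diagonal with block $\lambda(\mu(x))I_{r_\lambda}+N_\lambda(x)$ and $N_\lambda(x)$ strictly upper triangular. On each weight block Lemma \ref{lemma} gives a decomposition in $\ncs{K_{\mathrm{exc}}^{\mathrm{rat}}}{\calX}\shuffle\ncp{K}{\calX}$, and summing these contributions through $\nu$ and $\eta$ yields the claimed form for $R$. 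For $\Leftarrow$, write $R=R_{\mathrm{exc}}\shuffle P$ with $P\in\ncp{K}{\calX}$; by Proposition \ref{linearrepresentation} the tensor representation has matrices $\mu_1(x)\otimes I+I\otimes\mu_2(x)$, where $\mu_1$ has commuting image and $\mu_2$ (representing a polynomial) has nilpotent image. The resulting Lie algebra is a sum of two commuting ideals, one abelian and one nilpotent, hence nilpotent, and the subquotient remark passes this to $\calL$.

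The tower of sub Hopf algebras then follows. Closure under $\shuffle$ uses the commutator identity $[\mu_1(a)\otimes I+I\otimes\mu_2(a),\mu_1(b)\otimes I+I\otimes\mu_2(b)]=[\mu_1(a),\mu_1(b)]\otimes I+I\otimes[\mu_2(a),\mu_2(b)]$: the Lie algebra generated by the $\shuffle$-representation sits inside $\calL_1\otimes I+I\otimes\calL_2$, a sum of two commuting ideals of the prescribed type. Closure under $\Delta_{{\tt conc}}$ uses that in the block upper triangular representation associated with $\Delta_{{\tt conc}}(R)=\sum G_i\otimes D_i$, the diagonal blocks are representations of the components $G_i,D_i$ whose Lie algebras are subquotients of $\calL$. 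The main obstacle will be the implication $(2)\Rightarrow$: extracting the block form required by Lemma \ref{lemma} from Engel's theorem needs the full weight-space decomposition and a careful accounting of how $\nu,\eta$ split along these weight spaces; a secondary subtlety is verifying $\Delta_{{\tt conc}}$-closure, since one must propagate the structural condition to minimal representations of the coproduct components rather than merely to some representation of them.
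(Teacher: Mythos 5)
Your proposal is correct and follows essentially the same route as the paper: the weight/indecomposable-module decomposition combined with Lemma \ref{lemma} for the nilpotent case, Lie's theorem together with the factorization $M(\calX^*)=(D(\calX^*)N(\calX))^*D(\calX^*)$ for the solvable case, the tensor and block constructions of Proposition \ref{linearrepresentation} followed by passage to the minimal quotient for all the converse implications, and the ``insertion of the identity'' $\sum_{i}e_i^*e_i$ (same $\mu$ for the components $G_i,D_i$) for closure under $\Delta_{\tt conc}$. The only cosmetic deviation is at point (1), where the paper uses minimality concretely, writing $\mu(u)=(\scal{R}{Q_iuP_j})_{i,j}$ and invoking syntactic exchangeability to get $\mu(xy)=\mu(yx)$, whereas you apply the same subquotient/minimization transport you use for the other two points; both arguments are valid.
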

\begin{proof}
\begin{enumerate}
\item Let us remark that, for $x,y\in\calX,p,s\in \calX^*$, we have 
$\scal{R}{pxys}=\scal{R}{pyxs}$ which is due to the commutation of matrices.
Conversely, since $\rho$ is minimal then there is $P_i,Q_i\in\ncp{K}{\calX},i=1...n$
such that (see \cite{berstel,DR,schutz1})
\begin{eqnarray*}
\forall u\in\calX^*,&&
\mu(u)=(\scal{P_i\trr R\trl Q_i}{u})_{1\leq i,j\leq n}=(\scal{R}{Q_iuP_i})_{1\leq i,j\leq n}.
\end{eqnarray*}
Now, for $x,y\in \calX$, we have 
\begin{eqnarray*}
\mu(xy)=(\scal{R}{Q_ixyP_i})_{1\leq i,j\leq n}\stackrel{*}{=}(\scal{R}{Q_iyxP_i})_{1\leq i,j\leq n}=\mu(yx)
\end{eqnarray*}
equality $\stackrel{*}{=}$ being due to exchangeability.

\item Let us consider $K^n$ as the space of the representation of $\calL$ given by $\mu$. 
Let $K^n=\bigoplus_{j=1}^m V_j$ be a decomposition of $K^n$ into indecomposable
$\calL$-modules (see \cite{dixmier}, Theorem 1.3.19 where it is done for $ch(K)=0$,
or \cite{Lie7} Chapter VII \S 1 Propopsition 9 for arbitrary characteristic),
we know that each $V_j$ is a $\calL$-module and that the action of $\calL$
is triangularizable with constant diagonals inside each sector $V_j$.
Thus, it is an invertible matrix  $P\in\mathrm{GL}(n,K)$ such that
\begin{eqnarray*}
\forall x\in \calX,&&P\mu(x)P^{-1}=\mathrm{blockdiag}(T_1,T_2\ldots,T_k)=
\begin{pmatrix}
T_1&0&0&\ldots&0\cr
0&T_2&0&\ldots&0\cr
\vdots&\ddots&\ddots&\ddots&\vdots\cr
0&0&\ldots&0&T_k
\end{pmatrix}
\end{eqnarray*}  
where the $T_j$ are upper triangular matrices with scalar diagonal \textit{i.e.} is of the form
$T_j(x)=\lambda(x)I+N(x)$ where $N(x)$ is strictly upper-triangular\footnote{Even, as $K$ is infinite,
there is a global linear form on $\calL$, $\lambda_{lin}$ such that, for all $g\in\calL$,
$PgP^{-1}-\lambda_{lin}(g)I$ is strictly upper-triangular.}. Set $d_j$ to be the dimension of $T_j$
(so that $n=\sum_{j=1}^m\,d_j$), partitioning  $\nu P^{-1}=\nu'$ (resp. $P\eta=\eta'$)
with these dimensions we get blocks so that each $(\nu'_j,T_j,\eta'_j)$ is the representation
of a series $R_j$ and $R=\sum_{j=1}^m\, R_j$. It suffices then to prove that, for all $j$, 
$R_j\in \ncs{K_{\mathrm{exc}}^{\mathrm{rat}}}{\calX}\shuffle\ncp{K}{\calX}$.
This is a consequence of Lemma \ref{lemma}.

Conversely, if $\rho_i=(\nu_i,\tau_i,\eta_i),i=1,2$, are two representations then
$[\tau_1(x)\otimes I_r+I_r\otimes\tau_2(x),\tau_1(y)\otimes I_r+I_r\otimes\tau_2(y)]
=[\tau_1(x),\tau_1(y)]\otimes I_r+I_r\otimes [\tau_2(x),\tau_2(y)]$
and a similar formula holds for $m$-fold brackets (Dynkin combs), so that if $\calL(\tau_i)$'s
are nilpotent, the Lie algebra $\calL(\tau_1\otimes I_r+I_r\otimes \tau_2)$ is also nilpotent.
The point here comes from the fact that series in $\ncs{K_{\mathrm{exc}}^{\mathrm{rat}}}{\calX}$
as well as in $\ncp{K}{\calX}$ admit nilpotent representations, so, let $(\alpha,\tau,\beta)$
such a representation and $(\alpha',\tau',\beta')$ its minimal quotient (obtained by minimization,
see \cite{berstel}), then $\calL(\tau')$ is nilpotent as a quotient of $\calL(\tau)$.
Now two minimal representations being isomorphic, $\calL(\mu)$ is isomorphic to $\calL(\tau)$
and then it is nilpotent.

\item As $\calL$ is solvable and $K$ algebraically closed, using Lie's theorem,
we can find a conjugate form of $\rho=(\nu,\mu,\eta)$ such that the matrices $\mu(x)$ are upper-triangular.
Since this form also represents $R$, letting $D(\calX)$ (resp. $N(\calX)$) be the diagonal
(rep. strictly upper-triangular) letter matrice such that $M(\calX)=D(\calX)+N(\calX)$ then 
\begin{eqnarray*}
R=\nu M(\calX^*)\eta=\nu(D(\calX^*)N(\calX))^*D(\calX^*)\eta.
\end{eqnarray*}  
Since $D(\calX^*)N(\calX)$ being nilpotent of order $n$ then
$(D(\calX^*)N(\calX))^*=\sum_{j=0}^n(D(\calX^*)N(\calX))^j$.
Hence, letting ${\cal S}$ be the vector space generated by forms of type $(F_2)$ which is
closed by concatenation, we have $D(\calX^*)N(\calX)\in{\cal S}^{n\times n}$ and then
$(D(\calX^*)N(\calX))^*\in{\cal S}^{n\times n}$.
Finally, $R=\nu M(\calX^*)\eta\in{\cal S}$ which is the claim.

Conversely, as sums and quotients of solvable representations 
are solvable is suffices to show that a single form of type $F_2$ admits a solvable representation
and end by quotient and isomorphism as in (ii). From Proposition \eqref{linearrepresentation},
we get the fact that, if $R_i$ admit solvable representations so does $R_1R_2$,
then the claim follows from the fact that, firstly, single letters admit solvable (even nilpotent)
representations and secondly series of $\shuffle\{\ncs{K^{\mathrm{rat}}}{x}\}_{x\in\calX}$
admit solvable representations. Finally, we choose (or construct) a solvable representation of $R$,
call it $(\alpha,\tau,\beta)$ and  $(\alpha',\tau',\beta')$ its minimal quotient,
then $\calL(\tau')$ is solvable as a quotient of $\calL(\tau)$. Now two minimal representations
being isomorphic, $\calL(\mu)$ is isomorphic to $\calL(\tau)$, hence solvable.

Moreover and ff.]
Comes from the computation of the coproduct by insertion of identity $\sum_{i=1}^n\, e_i^*e_i$.
\end{enumerate}
\end{proof}

\begin{remark}
For an example of series $S$ with solvable representation but such that 
$S\notin\ncs{K_{\mathrm{exc}}^{\mathrm{rat}}}{\calX}\shuffle\ncp{K}{\calX}$.
One can take $\calX=\{a,b\}$ and $S=a^*b(-a)^*$.
\end{remark}

To end this section (of combinatorial framework), for a need of the proof of
Theorem \ref{PeanoBacker} below, let us extend the pairing \eqref{pairing}
as a partially defined map  
\begin{eqnarray}\label{pairing2}
\Dom(\sscal{\bullet}{\bullet})
&\longrightarrow&A,\\
T\otimes S&\longrightarrow&\sscal{T}{S}:=\sum_{w\in\calX^*}\scal{T}{w}\scal{S}{w}.
\end{eqnarray}
where $\Dom(\sscal{\bullet}{\bullet})\subset \ncs{A}{\calX}\otimes\ncs{A}{\calX}$.

Here, the family $\sum_{w\in\calX^*}{\scal{T}{w}}\scal{S}{w}$ is summable, for some topology on $A$.
Its sum is denoted by $\sscal{T}{S}$ and the set of these series $S$ is denoted by $\Dom_{word}(T)$.

\section{Towards a noncommutative Picard-Vessiot theory}\label{PVNC}
Let $(\calA,d)$ be a commutative associative differential ring ($\ker(d)=k$ being a field),
$\calC_0$ be a differential subring of $\calA$ ($d(\calC_0)\subset\calC_0$) which is an integral
domain containing the field of constants and $\dext{\C}{(g_i)_{i\in I}}$ be the differential
subalgebra of $\calA$ generated by $(g_i)_{i\in I}$, \textit{i.e.} the $k$-algebra generated by
$g_i$'s and their derivatives \cite{VdP}.

\subsection{Noncommutative differential equations}\label{PV}
Let us consider the following differential equation, with homogeneous series
of degree $1$ as multiplier (a polynomial in the case of finite alphabet).

\begin{eqnarray}\label{NCDE_abs}
\dd S=MS;&\scal{S}{1}=1,&\mbox{where }M=\sum_{x\in\calX}u_x x\in\ncs{\calC_0}{\calX}
\end{eqnarray}

\begin{example}
$X=\{x_0,x_1\}$ and $\Omega=\C\setminus(]-\infty,0]\cup[1,+\infty[)$,
\begin{eqnarray*}
\dd S=(x_0u_{x_0}+x_1u_{x_1})S&\mbox{with}&u_{x_0}(z)=z^{-1},u_{x_1}(z)=(1-z)^{-1}.
\end{eqnarray*}
Solution of this equation is one factor of the solution of the equation $(KZ_3)$ proposed in \cite{drinfeld1,drinfeld2}.
A complete study was presented in \cite{CM} (solutions via polylogarithms and their special values: polyzetas).
\end{example}

\begin{example}
$Y=\{y_i\}_{i\ge1}$ and $\Omega=\{z\in\C\,|\,\abs{z}<1\}$.
\begin{eqnarray*}
\dd S=\biggl(\sum_{i\ge1}y_iu_{y_i}\biggr)S&\mbox{with}&u_{y_i}(z)=\partial\ell_i(z).
\end{eqnarray*}
where, denoting $\gamma$ the Euler's constant and $\zeta$ the Riemann zeta function,
\begin{eqnarray*}
\ell_1(z):=\gamma z-\sum_{k\ge2}\zeta(k)\frac{(-z)^k}k&\mbox{and for}&
r\ge2,\ell_r(z):=-\sum_{k\ge1}\zeta(kr)\frac{(-z^r)^k}k.
\end{eqnarray*}
This equation was introduced in \cite{eulerianfunctions} to study the independence of a family of eulerian functions.
\end{example}

Let us also recall the following useful result for proving Theorem \ref{PeanoBacker} below.

\begin{proposition}[\cite{words03,orlando,acta}]\label{tau}
Let $S\in\ncs{\calA}{\calX}$ be solution of \eqref{NCDE_abs}.
Then $S$ satisfies the differential equations ${\bf d}^lS=Q_lS$, for $l\ge0$,
where $\Q_l\in\ncp{\dext{\C}{(u_i)_{i\ge0}}}{\calX}$ satisfying the recursion
$Q_0=1$ and $Q_l=Q_{l-1}M+{\bf d}Q_{l-1}$.
More explicitly, $Q_l$ can be computed as follows (suming over words $w=x_{i_1}\ldots x_{i_l}$
and derivation multi-indices ${\bf r}=(r_1,\ldots,r_l)$ of degree $\deg{\bf r}=\abs{w}=l$
and of weight ${\tt wgt\;}{\bf r}=l+r_1+\ldots+r_l$)
\begin{eqnarray*}\label{Q_l}
Q_l=\sum_{{\tt wgt\;}{\bf r}=l\atop w\in\calX^{\deg{\bf r}}}
\prod_{l=1}^{\deg{\bf r}}{\sum_{j=1}^lr_j+j-1\choose r_l}\tau_{{\bf r}}(w)
&\mbox{and}&\left\{\begin{array}{r}
\tau_{{\bf r}}(w)=\tau_{r_1}(x_{i_1})\ldots\tau_{r_l}(x_{i_l})=\cr
(\partial_z^{r_1}u_{x_{i_1}})x_{i_1}\ldots(\partial_z^{r_l}u_{x_{i_l}})x_{i_l}.
\end{array}\right.
\end{eqnarray*}
\end{proposition}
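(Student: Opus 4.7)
The proof splits naturally into two parts: the recursion and the closed-form.

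\textbf{Recursion.} I would argue by induction on $l$. For $l=0$ the identity $\mathbf{d}^0 S = S = Q_0 S$ is the base case with $Q_0=1$. Assuming $\mathbf{d}^l S = Q_l S$ with $Q_l \in \ncp{\dext{\C}{(u_i)_{i\ge 0}}}{\calX}$, I apply $\mathbf{d}$ to both sides, use the Leibniz rule in $\ncs{\calA}{\calX}$ (valid since $\mathbf{d}$ is a derivation) and then substitute $\mathbf{d}S = MS$ from \eqref{NCDE_abs}:
\begin{eqnarray*}
\mathbf{d}^{l+1}S \;=\; \mathbf{d}(Q_l S) \;=\; (\mathbf{d}Q_l)S + Q_l(\mathbf{d}S) \;=\; (\mathbf{d}Q_l + Q_l M)S \;=\; Q_{l+1}S.
\end{eqnarray*}
Since $M\in\ncs{\calC_0}{\calX}$ is homogeneous of degree $1$ and $\mathbf{d}$ preserves the subring $\ncp{\dext{\C}{(u_i)}}{\calX}$ (because $d(\calC_0)\subset\calC_0$), the fact that $Q_{l+1}$ is a \emph{polynomial} with coefficients in the differential subalgebra generated by the $u_i$ follows automatically; moreover, a weight argument (tracking letter-length plus total order of differentiation on coefficients) shows $Q_l$ is concentrated in the right graded piece.

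\textbf{Explicit formula.} I would again argue by induction on $l$, verifying that the expression
$$Q_l \;=\; \sum_{{\tt wgt\,}\mathbf{r}=l,\; w\in\calX^{\deg\mathbf{r}}} \prod_{j=1}^{\deg\mathbf{r}} \binom{\sum_{j'=1}^{j} r_{j'} + j - 1}{r_j}\,\tau_{\mathbf{r}}(w)$$
satisfies the recursion $Q_{l+1}=Q_l M + \mathbf{d}Q_l$. The term $Q_l M$ contributes, for a word $w'=wx_i$ and multi-index $\mathbf{r}'=(r_1,\ldots,r_k,0)$, the previous binomial product (the new last factor with $r_{k+1}=0$ being a trivial $\binom{\cdot}{0}=1$). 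Applying $\mathbf{d}$ to a term $\tau_{\mathbf{r}}(w)$ means, by Leibniz, distributing one extra derivative over the coefficients $\partial_z^{r_j}u_{x_{i_j}}$, giving $\deg\mathbf{r}$ summands where one $r_j$ is increased to $r_j+1$. Regrouping the contributions of $Q_l M$ and $\mathbf{d}Q_l$ that produce the same $(\mathbf{r}',w')$ in $Q_{l+1}$ then reduces to the Pascal-type identity
$$\binom{n+j-1}{r_j+1} \;=\; \binom{n+j-2}{r_j}\;+\;\binom{n+j-2}{r_j+1}$$
applied column by column to the running partial sums $n_j=\sum_{j'\le j} r'_{j'}$, together with the boundary case of the new trailing letter where the upper index shifts by one. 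Checking these binomial manipulations carefully is the only non-trivial step.

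\textbf{Main obstacle.} The recursion proof itself is almost mechanical; the real work is the bookkeeping in the second part. Concretely, the hard part is matching, for each target monomial $\tau_{\mathbf{r}'}(w')$ in $Q_{l+1}$, the two sources of contribution (either concatenating a letter from $M$, or differentiating one of the existing coefficient factors) against a single product of binomial coefficients. This amounts to a finite verification of Pascal's identity applied inductively along the word, but one must be careful about the off-by-one shift occurring when the new letter is appended (which changes $\deg\mathbf{r}$ and hence the upper arguments $\sum_{j'\le j} r'_{j'}+j-1$ for all subsequent factors). Once this combinatorial bookkeeping is done, the formula follows and the proposition is established.
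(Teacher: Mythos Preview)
The paper does not actually prove this proposition; it is cited from earlier references \cite{words03,orlando,acta} and stated without proof. So there is no in-paper argument to compare against, and your proposal must be judged on its own merits.

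Your treatment of the recursion is correct and complete: the induction step $\mathbf{d}^{l+1}S=\mathbf{d}(Q_lS)=(\mathbf{d}Q_l)S+Q_l(\mathbf{d}S)=(\mathbf{d}Q_l+Q_lM)S$ is exactly the right computation, and your remark that $Q_l$ stays a polynomial with coefficients in the differential algebra generated by the $u_i$ follows immediately from the shape of $M$ and the stability of $\calC_0$ under $\partial$.

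For the explicit formula, your plan is the natural one and you have correctly identified the two sources of a given monomial $\tau_{\mathbf{r}'}(w')$ in $Q_{l+1}$: appending a fresh letter from $M$ (forcing the last entry of $\mathbf{r}'$ to be $0$) versus incrementing some $r_j$ via Leibniz. What remains is genuinely a computation, not a conceptual gap: one must check that the product of binomials for $\mathbf{r}'$ equals the sum, over all admissible predecessors $\mathbf{r}$, of the corresponding products. This is not a single application of Pascal's rule but a telescoping over the positions $j$, since incrementing $r_j$ by one shifts the upper argument $\sum_{j'\le m}r_{j'}+m-1$ of \emph{every} factor with $m\ge j$. You flag this as the main obstacle, which is honest, but you stop short of carrying it out. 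Until that bookkeeping is written down (or replaced by an alternative derivation, e.g.\ recognizing the coefficient as counting lattice paths or ordered set partitions arising from iterated Leibniz), the second half of the proof is an outline rather than a proof.
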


\begin{theorem}\label{ind_lin}
Suppose that the $\C$-commutative ring $\calA$ is without zero divisors and equipped with
a differential operator $\partial$ such that $\C=\ker\partial$.

Let $S\in\ncs{\calA}{\calX}$ be a grouplike solution of \eqref{NCDE_abs}, in the following form
\begin{eqnarray*}
S=1_{\calX^*}+\sum_{w\in\calX^*\cal X}\scal{S}{w}w
=1_{\calX^*}+\sum_{w\in\calX^*\cal X}\scal{S}{S_w}P_w
=\prod_{l\in\Lyn\calX}^{\searrow}e^{\scal{S}{S_l}P_l}.
\end{eqnarray*}
Then
\begin{enumerate}
\item If $H\in\ncs{\calA}{\calX}$ is another grouplike solution of \eqref{NCDE_abs}
then there exists $C\in\ncs{\calL ie_{\calA}}{\calX}$ such that $S=He^C$ (and conversely).
\item The following assertions are equivalent
\begin{enumerate}
\item\label{item1} $\{\scal{S}{w}\}_{w\in\calX^*}$ is $\calC_0$-linearly independent,
\item\label{item2} $\{\scal{S}{l}\}_{l\in\Lyn\calX}$ is $\calC_0$-algebraically independent,
\item\label{item3} $\{\scal{S}{x}\}_{x\in\calX}$ is $\calC_0$-algebraically independent,
\item\label{item4} $\{\scal{S}{x}\}_{x\in\calX\cup\{1_{\calX^*}\}}$ is $\calC_0$-linearly independent,
\item\label{item5} The family $\{u_x\}_{x\in\calX}$ is such that, for $f\in\mathrm{Frac}(\calC_0)$
and $(c_x)_{x\in\calX}\in\C^{(\calX)}$,
\begin{eqnarray*}
\sum_{x\in\calX}c_xu_x=\partial f&\Longrightarrow&(\forall x\in\calX)(c_x=0).
\end{eqnarray*}
\item\label{item6} The family $(u_x)_{x\in\calX}$ is free over $\C$ and
$\partial\mathrm{Frac}(\calC_0)\cap\span_{\C}\{u_x\}_{x\in\calX}=\{0\}$.
\end{enumerate}
\end{enumerate}
\end{theorem}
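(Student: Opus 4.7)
\medskip
\noindent\textbf{Proof plan.}

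For the first assertion, the plan is the standard ``quotient trick''. Set $T = H^{-1}S$ and compute $\dd T$ using Leibniz. Because $H$ satisfies the same equation, $\dd(H^{-1}) = -H^{-1}(\dd H)H^{-1} = -H^{-1}M$, so
$$\dd T = -H^{-1}MS + H^{-1}(MS) = 0,$$
which places $T$ in $\mathrm{Const}(\ncs{\calA}{\calX}) = \ncs{\C}{\calX}$. Since $H$ and $S$ are both group-like in $\calH_{\shuffle}^{\vee}(\calX)$ (completed), so is $T = H^{-1}S$. A constant group-like series is an exponential of a primitive element (by the Cartier--Milnor--Moore / Ree theorem applied to the complete enveloping algebra), hence $T = e^{C}$ with $C \in \ncs{\calL ie_{\C}}{\calX} \subset \ncs{\calL ie_{\calA}}{\calX}$. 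The converse is immediate: if $S = He^C$ with $C$ a Lie series of constants, then $\dd S = (\dd H)e^C + H \cdot 0 = M H e^C = MS$ and $S$ inherits group-likeness from $H$ and $e^C$.

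For the second assertion, I propose the cycle $\eqref{item1} \Leftrightarrow \eqref{item2} \Rightarrow \eqref{item3} \Rightarrow \eqref{item4} \Rightarrow \eqref{item5} \Leftrightarrow \eqref{item6} \Rightarrow \eqref{item1}$. The equivalence $\eqref{item1} \Leftrightarrow \eqref{item2}$ exploits that, $S$ being group-like, $\scal{S}{\cdot}$ is a shuffle character; the PBW-type factorization \eqref{quarante-deux} yields $\scal{S}{S_w} = \prod \scal{S}{S_l}^{i_l}/i_l!$, and the Radford theorem tells us $(\ncp{\calC_0}{\calX},\shuffle)$ is polynomial in $\{S_l\}_{l\in\Lyn\calX}$; the transition $\{w\} \leftrightarrow \{S_w\}$ and $\{l\} \leftrightarrow \{S_l\}$ are triangular, so the linear and algebraic independence statements match. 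Implications $\eqref{item2} \Rightarrow \eqref{item3} \Rightarrow \eqref{item4}$ are immediate (letters are Lyndon words, and algebraic independence implies linear independence together with $\scal{S}{1_{\calX^*}}=1$). For $\eqref{item4} \Rightarrow \eqref{item5}$, since $\partial\scal{S}{x} = u_x$, any relation $\sum c_x u_x = \partial f$ with $f = p/q \in \mathrm{Frac}(\calC_0)$ integrates to $q\sum c_x \scal{S}{x} - c_0 q - p = 0$ in $\calC_0$, contradicting $\eqref{item4}$ (after using that $\calA$ has no zero divisors and $q \neq 0$). The equivalence $\eqref{item5} \Leftrightarrow \eqref{item6}$ is a direct rewording: taking $f=0$ in $\eqref{item5}$ gives $\C$-freeness of $\{u_x\}$, and the remaining content of $\eqref{item5}$ is precisely the trivial intersection $\partial\mathrm{Frac}(\calC_0) \cap \span_\C\{u_x\} = \{0\}$.

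The main obstacle is closing the cycle with $\eqref{item6} \Rightarrow \eqref{item1}$. My approach is induction on the maximum length (or weight, for $Y$) $N$ of words appearing in a putative $\calC_0$-linear relation $\sum_{|w|\le N} c_w \scal{S}{w} = 0$ with not all $c_w$ zero. Differentiating and using $\partial\scal{S}{xv} = u_x \scal{S}{v}$ rewrites this as
$$\sum_{v} \bigl(\partial c_v + \sum_{x\in\calX} u_x\, c_{xv}\bigr)\scal{S}{v} = 0,$$
which is a new relation of length $\le N$. Isolating the coefficient of $\scal{S}{v}$ for a word $v$ of maximal length $N$ (so that $\partial c_v$ appears but no $c_{xv}$ does), and working modulo the inductive hypothesis at length $< N$, one extracts a relation of the form $\sum_x c_{xv} u_x \in \partial \mathrm{Frac}(\calC_0)$ with $c_{xv} \in \calC_0$. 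Clearing denominators and reducing coefficients modulo $\partial \calC_0$ produces (via the group-like/shuffle-character structure used to rewrite cross terms) a $\C$-linear combination of $u_x$'s in $\partial \mathrm{Frac}(\calC_0)$; by $\eqref{item6}$ this combination vanishes, contradicting either the $\C$-freeness of $\{u_x\}$ or the inductive hypothesis.

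The delicate part I expect to be the cleanest: controlling how the $\partial c_w$ terms (which lie in $\calC_0$, not in $\C$) interact with the hypothesis $\eqref{item6}$ (which is about $\C$-combinations). The standard remedy is to choose a relation of minimal total length \emph{and} minimal complexity of its $\calC_0$-coefficients (e.g.\ minimal length in some filtration of $\calC_0$), so that after one differentiation the coefficients become simpler and the minimality forces the extracted $\C$-combination to be trivial. This is where the hypotheses that $\calA$ has no zero divisors and $\ker\partial = \C$ are both essential.
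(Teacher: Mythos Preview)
Your argument for part 1 is correct and is exactly the standard quotient trick (this is what the reference \cite{orlando} does).

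For part 2, your cycle and the easy implications (\ref{item1})$\Leftrightarrow$(\ref{item2})$\Rightarrow$(\ref{item3})$\Rightarrow$(\ref{item4}) and (\ref{item5})$\Leftrightarrow$(\ref{item6}) are fine and match the paper. The paper, however, does \emph{not} prove (\ref{item4})$\Rightarrow$(\ref{item5}) or (\ref{item6})$\Rightarrow$(\ref{item1}) directly: it first embeds everything in $\mathrm{Frac}(\calA)$ (so that $\calC_0$-linear independence and $\mathrm{Frac}(\calC_0)$-linear independence coincide by denominator chasing), and then invokes Theorem~1 of \cite{Linz} as a black box for both directions. Your proposal instead reproves that theorem inline.

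Two comments on your direct route. First, a small gap in (\ref{item4})$\Rightarrow$(\ref{item5}): when you integrate $\sum c_xu_x=\partial f$ you use that the constant of integration lies in $\C$, but the hypothesis $\ker\partial=\C$ is stated in $\calA$, whereas $\sum c_x\scal{S}{x}-f$ lives a priori in $\mathrm{Frac}(\calA)$. The paper avoids this by passing to $\mathrm{Frac}(\calA)$ from the start and noting that the extended derivation still has $\C$ as constants (this needs a word of justification, but is standard for integral domains whose ring of constants is already a field).

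Second, your sketch of (\ref{item6})$\Rightarrow$(\ref{item1}) identifies the genuine obstacle correctly---the differentiated relation has $\calC_0$-coefficients while (\ref{item6}) speaks only of $\C$-combinations---and your proposed remedy (take a relation minimal in length \emph{and} in a filtration of the coefficients) is exactly how \cite{Linz} proceeds. But as written this is a plan, not a proof: you have not specified the filtration, nor shown that one differentiation strictly decreases it when the top coefficients are not already constant. The argument does go through (one works over $\mathrm{Frac}(\calC_0)$, normalises one top coefficient to $1$, differentiates, and uses minimality to force all top coefficients into $\C$), but it needs to be written out. What you gain over the paper is self-containment; what you lose is brevity, since the paper simply cites the result.
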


\begin{proof}[Sketch]
The first item has been treated in \cite{orlando}. The second is a grouplike
version of the abstract form of Theorem 1 of \cite{Linz}. It goes as follows
\begin{itemize}
\item due to the fact that $\calA$ is without zero divisors, we have the following embeddings
$\calC_0\subset\mathrm{Frac}(\calC_0)\subset\mathrm{Frac}(\calA)$, $\mathrm{Frac}(\calA)$ is
a differential field, and its derivation can still be denoted by $\partial$ as it induces the
previous one on $\calA$,
\item the same holds for $\ncs{\calA}{\calX}\subset\ncs{\mathrm{Frac}(\calA)}{\calX}$ and ${\bf d}$
\item therefore, equation \eqref{NCDE_abs} can be transported in $\ncs{\mathrm{Frac}(\calA)}{\calX}$
and $M$ satisfies the same condition as previously. 
\item Equivalence between \ref{item1}-\ref{item4} comes from the fact that $\calC_0$ is without zero divisors
and then, by denominator chasing, linear independances w.r.t $\calC_0$ and $\mathrm{Frac}(\calC_0)$ are equivalent.
In particular, supposing condition \ref{item4}, the family $\{\scal{S}{x}\}_{x\in\calX\cup\{1_{\calX^*}\}}$
(basic triangle) is  $\mathrm{Frac}(\calC_0)$-linearly independent which imply, by the Theorem 1 of \cite{Linz},
condition \ref{item5},
\item still by Theorem 1 of \cite{Linz}, \ref{item5} is equivalent to \ref{item6}, implying that
$\{\scal{S}{w}\}_{w\in\calX^*}$ is $\mathrm{Frac}(\calC_0)$-linearly independent which induces
$\calC_0$-linear independence (\textit{i.e.} \ref{item1}).
\end{itemize}  
\end{proof}

Now, let us go back to notations of Section \ref{introduction} and equip the differential rings of
\begin{enumerate}
\item holomorphic functions over a simply connected domain $\Omega$, $(\calH(\Omega),\partial)$,
with the topology of compact convergence (CC),
\item formal series over $\calX$ and with coefficients in $\calH(\Omega)$,
$(\ncs{\calH(\Omega)}{\calX},{\bf d})$, with the ultrametric distance defined by\footnote{
$\forall S\in\ncs{\calH(\Omega)}{\calX}$, if $S=0$ then $\varpi(S)=-\infty$
else $\min_{w\in\mathrm{supp}(S)}\{\abs{w}\mbox{ or }(w)\}$ \cite{berstel}.}
$\delta(S,T)=2^{-\varpi(S-T)}$.
\end{enumerate}
Let us also consider again the Chen series of the differential forms $(\omega_i)_{i\ge1}$
defined by the inputs $\omega_i=u_{x_i}dz$ along a path $z_0\path z$ on $\Omega$.
By \eqref{diagonalX}, it follows that
\begin{eqnarray}
C_{z_0\path z}=\sum_{w\in\calX^*}\alpha_{z_0}^z(w)w=(\alpha_{z_0}^z\otimes\mathrm{Id})\calD_\calX
=\Prod_{l\in\Lyn\calX}^{\searrow}e^{\alpha_{z_0}^z(S_l)P_l}.
\end{eqnarray}
This series satisfies \eqref{NCDE_abs} and is obtained as the limit, for the topology of
(discrete) pointwise convergence over the words, of Picard iteration process initialized at
$\scal{C_{z_0\path z}}{1_{\calX^*}}=1_{\calH(\Omega)}$. 

Let us illustrate Theorem \ref{ind_lin}, with simple examples,
for which $\calC_0$ contains $\dext{\C}{(u_x^{\pm1})_{x\in\calX}}=
\C[u_x^{\pm1},\partial^iu_x]_{i\ge1,x\in\calX}\subset\calA=(\calH(\Omega),\partial)$.
In these examples, we use

\begin{proposition}[\cite{IMACS}]\label{eval_trans}
For $\calX=\{x\}$, since $x^n=x^{\shuffle n}/n!$ then
\begin{eqnarray*}
\alpha_{z_0}^z(x^n)=\frac{(\alpha_{z_0}^z(x))^n}{n!},
&C_{0\path z}=\sum_{n\ge0}\alpha_{z_0}^z(x^n)x^n=e^{\alpha_{z_0}^z(x)x},
&\alpha_0^z(x^*)=e^{\alpha_{z_0}^z(x)}. 
\end{eqnarray*}
\end{proposition}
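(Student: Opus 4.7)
The plan is to verify the identities in order, exploiting that the single-letter case collapses shuffles to permutations of a constant word.

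First I would establish $x^n = x^{\shuffle n}/n!$ by induction on $n$ using the shuffle recursion \eqref{shuffle}: since there is only one letter, each of the $n!$ shuffles in the expansion $x^{\shuffle n}$ yields the same word $x^n$, so $x^{\shuffle n} = n!\, x^n$. Equivalently, one can argue from the divided-power structure, since Lyndon$(\calX) = \{x\}$ and the Radford pure-transcendence basis reduces to $\{x\}$, giving $S_w = x^{\shuffle n}/n!$ for $w = x^n$.

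Next I would invoke the classical fact that, along any fixed path $z_0 \path z$, the iterated-integral map $w \mapsto \alpha_{z_0}^z(w)$ is a character of the shuffle Hopf algebra $(\ncp{A}{\calX},\shuffle,1_{\calX^*})$, \emph{i.e.}
\[
\alpha_{z_0}^z(u \shuffle v) = \alpha_{z_0}^z(u)\,\alpha_{z_0}^z(v).
\]
This is a consequence of Chen's iterated-integral calculus (and also appears implicitly in the group-like property of $C_{z_0\path z}$ noted in Section \ref{introduction}). Applying it to $x^{\shuffle n} = n!\,x^n$ gives
\[
\alpha_{z_0}^z(x^n) = \frac{1}{n!}\,\alpha_{z_0}^z(x^{\shuffle n}) = \frac{(\alpha_{z_0}^z(x))^n}{n!}.
\]

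Having established the first two identities, the Chen series formula drops out by substitution into \eqref{Chen}:
\[
C_{z_0\path z} = \sum_{n\ge 0} \alpha_{z_0}^z(x^n)\,x^n = \sum_{n\ge 0} \frac{(\alpha_{z_0}^z(x)\, x)^n}{n!} = e^{\alpha_{z_0}^z(x)\,x},
\]
where convergence of the sum is controlled by the ultrametric topology on $\ncs{\calH(\Omega)}{\calX}$ introduced just before the statement (the general term has valuation $n$). For the last identity I would extend $\alpha_{z_0}^z$ to the series $x^* = \sum_{n\ge 0} x^n$ by the pairing \eqref{pairing2}, noting that $x^*$ lies in the word-domain of the character because the summability of $\sum_n (\alpha_{z_0}^z(x))^n/n!$ in $\calH(\Omega)$ (for the compact convergence topology, since $\alpha_{z_0}^z(x)$ is a bona fide holomorphic function on $\Omega$) makes the expansion $\sscal{\alpha_{z_0}^z}{x^*} = \sum_{n\ge 0} \alpha_{z_0}^z(x^n)$ absolutely convergent; summing the geometric/exponential series yields $e^{\alpha_{z_0}^z(x)}$.

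The only non-routine point is the convergence justification for $\alpha_{z_0}^z(x^*)$: one must match the two topologies at play (the ultrametric one on $\ncs{\calH(\Omega)}{\calX}$ in which $x^*$ lives, and the compact-convergence topology on $\calH(\Omega)$ in which the image series sums). This is the main technical obstacle, but it is mild: since the partial sums $\sum_{n\le N} \alpha_{z_0}^z(x^n)$ are the coefficients of an exponential power series in the scalar $\alpha_{z_0}^z(x) \in \calH(\Omega)$, locally uniform convergence on $\Omega$ is automatic, which is precisely what is required for $x^*$ to sit in $\mathrm{Dom}_{\mathrm{word}}(\alpha_{z_0}^z)$.
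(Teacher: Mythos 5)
Your argument is correct: the identity $x^{\shuffle n}=n!\,x^n$ for a one-letter alphabet, combined with the shuffle-character (Friedrichs/Ree) property of the iterated-integral map $\alpha_{z_0}^z$ already invoked in Section \ref{introduction}, gives the first identity, and the remaining two follow by substitution into \eqref{Chen} and by extending $\alpha_{z_0}^z$ to $x^*$ via the pairing \eqref{pairing2}, with convergence handled exactly as you say. The paper itself offers no proof of Proposition \ref{eval_trans} (it is quoted from \cite{IMACS}), and your derivation is the standard intended one, so there is nothing to object to.
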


\begin{example}\label{positive}
Let us consider two positive cases over $\calX=\{x\}$.
\begin{enumerate}
\item $\Omega=\C,u_x(z)=1_{\Omega},\calC_0=\C$.
Since $\alpha_0^z(x^n)=z^n/n!$ then, by Proposition \ref{eval_trans},
\begin{eqnarray*}
C_{0\path z}=e^{zx}&\mbox{and}&\dd C_{0\path z}=xC_{0\path z}.
\end{eqnarray*}
Moreover, $\alpha_0^z(x)=z$ which is transcendent over $\calC_0$ and $\{\alpha_0^z(x^n)\}_{n\ge0}$ is $\calC_0$-free.
Now, let $f\in\calC_0$ then $\partial=0$. Hence, if $\partial f=cu_x$ then $c=0$.
\item $\Omega=\C\setminus]-\infty,0],u_x(z)=z^{-1},\calC_0=\C[z^{\pm1}]\subset\C(z)$.
Since $\alpha_1^z(x^n)=\log^n(z)/n!$ then, by Proposition \ref{eval_trans},
\begin{eqnarray*}
C_{1\path z}=z^x&\mbox{and}&\dd C_{1\path z}=z^{-1}xC_{1\path z}.
\end{eqnarray*}
Moreover, $\alpha_1^z(x)=\log(z)$ which is transcendent over $\C(z)$ then over $\calC_0$
and $\{\alpha_1^z(x^n)\}_{n\ge0}$ is $\calC_0$-free.
Now, let $f\in\calC_0$ then $\partial f\in\mathrm{span}_{\C}\{z^{-n}\}_{n\in\Z,n\not=1}$.
Hence, if $\partial f=cu_x$ then $c=0$.
\end{enumerate}
\end{example}

\begin{example}\label{negative}
Let us consider two negative cases over $\calX=\{x\}$.
\begin{enumerate}
\item $\Omega=\C,u_x(z)=e^z,\calC_0=\C[e^{\pm z}]$.
Since $\alpha_0^z(x^n)=(e^z-1)^n/n!$ then, by Proposition \ref{eval_trans},
\begin{eqnarray*}
C_{0\path z}=e^{(e^z-1)x}&\mbox{and}&\dd C_{0\path z}=e^zxC_{0\path z}.
\end{eqnarray*}
Moreover, $\alpha_0^z(x)=e^z-1$ which is not transcendent over $\calC_0$
and $\{\alpha_0^z(x^n)\}_{n\ge0}$ is not $\calC_0$-free.
If $f(z)=ce^z\in\calC_0$ ($c\neq0$) then $\partial f(z)=ce^z=cu_x(z)$.
\item $\Omega=\C\setminus]-\infty,0],u_x(z)=z^a,a\in\C\setminus\Q,
\calC_0=\dext{\C}{z,z^{\pm a}}=\mathrm{span}_{\C}\{z^{ka+l}\}_{k,l\in\Z}$.
Since $\alpha_0^z(x^n)=(a+1)^{-n}z^{n(a+1)}/n!$ then, by Proposition \ref{eval_trans},
\begin{eqnarray*}
C_{0\path z}=e^{(a+1)^{-1}z^{(a+1)}x}&\mbox{and}&\dd C_{0\path z}=z^axC_{0\path z}.
\end{eqnarray*}
Moreover, $\alpha_0^z(x)=z^{a+1}/(a+1)$ which is not transcendent over $\calC_0$
and $\{\alpha_0^z(x^n)\}_{n\ge0}$ is not $\calC_0$-free.
If $f(z)=cz^{a+1}/(a+1)\in\calC_0$ ($c\neq0$) then $\partial f(z)=cz^a=cu_x(z)$.
\end{enumerate}
\end{example}

\subsection{First step of a noncommutative Picard-Vessiot theory}\label{preliminaries}
Let us recall that the vector space of solutions of \eqref{NCDE_abs}
is a free ($\ncs{\C}{\calX}$-right) module of dimension one\footnote{\label{Sols}
In fact, we will see that it is the $\ncs{\C}{\calX}$-right module
$C_{z_0\path z}.\ncs{\C.1_{\calH(\Omega)}}{\calX}$.}
generated by $C_{z_0\path z}$ \cite{orlando}. Hence, by Theorem \ref{ind_lin},
we have common traits with the ordinary case of first order differential equations,
\begin{enumerate}
\item the differential Galois group of \eqref{NCDE_abs} + grouplike is the Hausdorff
group $\{e^{C}\}_{C\in\ncs{\calL ie_{\C.1_{\calH(\Omega)}}}{\calX}}$
(group of characters of $\calH_{\shuffle}(\calX)$).
\item the PV extension related to \eqref{NCDE_abs} is $\serie{\calC}{\calX}(C_{z_0\path z})$,
where $\calC\subset\calA=(\calH(\Omega),\partial)$ such that
$\mathrm{Const}(\serie{\calC}{\calX})=\ker{\bf d}=\serie{\C.1_{\calH(\Omega)}}{\calX}$.
\end{enumerate}

The proof of Theorem \ref{PeanoBacker} below will use the following lemma
as a consequence of Theorem \ref{residual}
\begin{lemma}\label{FracRank}
For any ring $A$ without zero divisors, let $R\in\ncs{A^{\mathrm{rat}}}\calX$ of
linear representation $(\nu,\mu,\eta)$ of dimension $n$. Then any family
$\{R\trl P_i\vert P_i\in\ncp{A}{\calX}\}_{i=1\ldots m>n}$ is linearly dependent, \textit{i.e.} there are
$\{\alpha_i\}_{i=1\ldots m}$ in $A$, not all zero,  such that $\sum_{I=1}^m\alpha_i(R\trl P_i)=0$.
\end{lemma}

\begin{theorem}[\cite{words03,orlando,acta}]\label{PeanoBacker}
Let $R\in\ncs{\C.1_{\calH(\Omega)}^{\mathrm{rat}}}{\calX}$. Then, for any path
${z_0\path z}$ over $\Omega$, we have\footnote{Once $(z,z_0)$ is fixed on $\Omega$,
$\Dom_{word}(C_{z_0\path z})$ is the subset of $\ncs{A}{\calX}$ of series $R$
such that $\sum_{n\geq 0}\alpha_{z_0}^z(R_n)$ is convergent for the standard
topology, where $R_n=\sum_{|w=n|}\scal{R}{w}w$ is a homogeneous component
(we need to check that this series is convergent via {\it majoration morphisms}
\cite{words03,orlando,acta}).}
$R\in\Dom_{word}(C_{z_0\path z})$ and the output of \eqref{nonlinear} can be computed by
\begin{eqnarray*}
y(z_0,z)=\alpha_{z_0}^z(R)=\sum_{w\in\calX^*}(\nu\mu(w)\eta)\alpha_{z_0}^z(w)=\sscal{C_{z_0\path z}}{R}.
\end{eqnarray*}

Now, let $N$ be the least integer $n$ such that $y$ satisfies
a (non-trivial) differential equation of order $N$ (with coefficents in $\calC$), the family
$\{\partial y\}_{0\le k\le N-1}$ is $\calC$-linearly independent, \textit{i.e.}
\begin{eqnarray*}
(a_n\partial^N+\ldots+a_1\partial+a_0)y=0,&\mbox{with}&a_N,\ldots,a_0\in\calC.
\end{eqnarray*}
and, from what precedes, we have $N\le n=\mathrm{rk}(R)$.
\end{theorem}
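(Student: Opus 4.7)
The plan is to decompose the theorem into three independent assertions and attack each with different machinery. The first is the well-definedness $R \in Dom_{word}(C_{z_0\path z})$ together with summability; the second is the pairing identity $y(z_0,z) = \alpha_{z_0}^z(R) = \sscal{C_{z_0\path z}}{R}$; the third is the existence of the minimal-order differential equation with $\calC$-linearly independent $\{\partial^k y\}_{0\le k\le N-1}$ and the bound $N \le \mathrm{rk}(R)$.

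For summability I would invoke a majoration morphism: the linear representation $(\nu,\mu,\eta)$ of dimension $n$ gives, for any sub-multiplicative matrix norm with $\rho \ge \max_{x\in\calX}\|\mu(x)\|$, the exponential bound $|\nu\mu(w)\eta| \le K\rho^{|w|}$. Combined with the classical factorial decay $\sum_{|w|=n}|\alpha_{z_0}^z(w)| \le B^n/n!$ along any compact subarc of $z_0\path z$, this yields absolute convergence on compact subsets of $\Omega$. The pairing identity then drops out by unfolding the definitions of $\alpha_{z_0}^z$, of $\sscal{\cdot}{\cdot}$ and of $\scal{R}{w} = \nu\mu(w)\eta$. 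In the infinite-alphabet case $\calX = Y$, one replaces the length grading by the weight grading and adapts the bound accordingly.

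For the differential-equation part, the strategy is to bootstrap Proposition \ref{tau} through the adjunction between concatenation and the right shift $\trl$. Writing $y = \sscal{C_{z_0\path z}}{R}$ and differentiating termwise (legitimate thanks to the majoration of the first step), one obtains
\begin{eqnarray*}
\partial^l y \;=\; \sscal{\dd^l C_{z_0\path z}}{R} \;=\; \sscal{Q_l C_{z_0\path z}}{R} \;=\; \sscal{C_{z_0\path z}}{R \trl Q_l},
\end{eqnarray*}
where $Q_l \in \ncp{\dext{\C}{(u_x)_{x\in\calX}}}{\calX}$ is given by Proposition \ref{tau} and the last equality is the standard adjunction $\scal{R}{Q_l w} = \scal{R\trl Q_l}{w}$ summed against $\scal{C_{z_0\path z}}{w}$. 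The family $\{R\trl Q_l\}_{0\le l\le n}$ has $n+1$ elements and lies inside the shift-invariant module generated by shifts of $R$, whose rank over the coefficient ring is at most $n = \mathrm{rk}(R)$. Lemma \ref{FracRank} then produces a nontrivial relation $\sum_{l=0}^n a_l(R\trl Q_l) = 0$ with $a_l \in \dext{\C}{(u_x)} \subset \calC$; pairing against $C_{z_0\path z}$ converts it into $\sum_{l=0}^n a_l\,\partial^l y = 0$, a nontrivial differential equation of order at most $n$, whence $N \le n$. The $\calC$-linear independence of $\{\partial^k y\}_{0\le k\le N-1}$ is then immediate from the minimality of $N$: any nontrivial relation, truncated at its top nonzero coefficient, would produce a differential equation of order strictly smaller than $N$, contradicting the minimality.

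I expect the principal technical obstacle to sit in the analytic side of the first step, especially in the infinite-alphabet setting, where the length-based majorations must be reindexed by weight and one must carefully justify termwise differentiation on compact subsets of $\Omega$. By contrast, the algebraic heart of the third step reduces transparently to Lemma \ref{FracRank} and Proposition \ref{tau}, once the key identity $\partial^l y = \sscal{C_{z_0\path z}}{R \trl Q_l}$ has been established.
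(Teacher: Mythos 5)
Your proposal is correct and follows essentially the same route as the paper: the same majoration (exponential bound $|\nu\mu(w)\eta|$ from the representation against the factorial bound on the Chen coefficients) for summability, the same identity $\partial^l y=\sscal{C_{z_0\path z}}{R\trl Q_l}$ obtained from Proposition \ref{tau} and the adjunction $\sscal{Q_lC_{z_0\path z}}{R}=\sscal{C_{z_0\path z}}{R\trl Q_l}$, and the same appeal to Lemma \ref{FracRank} to force a dependence $\sum_{l=0}^{n}a_l(R\trl Q_l)=0$ yielding $N\le\mathrm{rk}(R)$. Your explicit minimality argument for the $\calC$-linear independence of $\{\partial^k y\}_{0\le k\le N-1}$ just spells out what the paper leaves implicit, so there is nothing to correct.
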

\begin{proof}
Due to this strong convergence condition, we have 
\begin{enumerate}
\item for any $T\in \ncs{\calH(\Omega)}{\calX}$ and $ P\in \ncp{\calH(\Omega)}{\calX}, S\in\Dom_{word}(T)$,
we have $S\in\Dom_{word}(PT)$,$S\trl P\in\Dom_{word}(T)$ and $\sscal{PT}{S}=\sscal{T}{S\trl P}$,

\item from the continuity of $\partial$, for any $T\in \ncs{\calH(\Omega)}{\calX}$ and $S\in\Dom_{word}(T)$,
we have $\partial(\sscal{T}{S})=\sscal{\dd T}{S}+\sscal{T}{\dd S}$.
\end{enumerate}

Now, let $(\nu,\mu,\eta)$ be a representation of $R\in\ncs{\C.1_{\calH(\Omega)}^{\mathrm{rat}}}{\calX}$
of rank $n$. Let us see that the family $(\scal{C_{z_0\path z}}{w}\scal{R}{w})_{w\in\calX^*}$ is summable
in $\calH(\Omega)$. Indeed, since the matrix norm is multiplicative then, for any $w\in\calX^*$ and $B_1>0$,
we have\footnote{We choose a matrix norm (\textit{i.e.} multiplicative) on $\C^{n\times n}$, denoted $\absv{M}$,
and two norms $\absv{\nu}_r,\absv{\eta}_c$ on $\C^{1\times n},\C^{n\times 1}$, respectively, and there is
classically $k_1>0$ such that $\abs{\nu.M.\eta}\leq k_1\absv{\nu}_r\absv{M}\absv{\eta}_c$.}
\begin{eqnarray*}
\absv{\mu(w)}\le B_1^{\abs{w}}&\mbox{and}&\abs{\nu\mu(w)\eta}\leq k_1\absv{\nu}_r\absv{\mu(w)}\absv{\eta}_c.
\end{eqnarray*}
The Chen series $C_{z_0\path z}$ is exponentially bounded from above\footnote{
In the references the bounding is finer and adapted as well to infinite alphabet.},
\textit{i.e.} for all compact $\kappa\subset\Omega$, there is $k_2,B_2>0$ such
that\footnote{For any $f\in\calH(\Omega)$, we denote
$\absv{f}_{\kappa}:=\sup_{s\in{\kappa}}\abs{f(s)}$.} \cite{words03,orlando,acta}
\begin{eqnarray*}
\forall w\in\calX^*,&&\absv{\scal{C_{z_0\path z}}{w}}_{\kappa}\le k_2{B_2^{\abs{w}}}/{\abs{w}!}.
\end{eqnarray*}
Hence, choosing a compact ${\kappa}\subset\Omega$, we obtain 
\begin{eqnarray*}
\sum_{w\in\calX^*}\absv{\scal{C_{z_0\path z}}{w}\scal{R}{w}}_{\kappa}
&\le&\sum_{w\in\calX^*}\absv{\scal{C_{z_0\path z}}{w}}_{\kappa}\abs{\sscal{R}{w}}\\
&\le&\sum_{w\in\calX^*}k_2\frac{B_2^{\abs{w}}}{\abs{w}!}(k_1\absv{\nu}_rB_1^{\abs{w}}\absv{\eta}_c)<+\infty.
\end{eqnarray*}
Since $y=y(z_0,z)=\alpha_{z_0}^z(R)$ and $\partial$ is continuous for (CC) then, by Proposition \ref{tau},
\begin{eqnarray*}
\partial^l y(z_0,z)=\sscal{\dd^lC_{z_0\path z}}{R}&\mbox{and for}&l\le n,\dd^lC_{z_0\path z}=Q_l(z)C_{z_0\path z}
\end{eqnarray*}
and then \cite{words03,orlando,acta}
\begin{eqnarray*}
\partial^l y(z_0,z)=\sscal{Q_l(z)C_{z_0\path z}}{R}=\sscal{C_{z_0\path z}}{R\rg Q_l(z)}.
\end{eqnarray*}
By Lemma \ref{FracRank}, there is $\{a_k\}_{k=0,..,n}$ in $\calC$, not all zero,
such that $\sum_{k=0}^na_k(R\trl Q_k)=0$ yielding the expected result.
This linear independence holds in any module whatever the ring.
\end{proof}

\begin{remark}\label{rk3}
\begin{enumerate}
\item The rational series in Theorem \ref{PeanoBacker} is the generating series of the first order
linear differential system, $\partial q=(u_0\mu(x_0)+\ldots+u_m\mu(x_m))q,y=\nu q$, initialized at $y(z_0)=\eta$.
From \cite{fliess1}, $y(z)=\alpha_{z_0}^z(R)$. The $N$th-order differential equation in Theorem \ref{PeanoBacker}
is then the result, obtained by eliminating the states $\{q_i\}_{i=0,..,m}$ in this system.
\item The converse process is also possible thanks to the \textit{compagnion form}.
\item Analogue results for nonlinear equations can be found in \cite{words03,orlando,acta}.
\end{enumerate}
\end{remark}

\section{Conclusion}
In this work, we proposed a first step to construct a Picard-Vessiot theory for the class of
noncommutative differential equations satisfied by the Chen series $C_{z_0\path z}$ over
the alphabet $\calX=\{x_i\}_{i\ge0}$ (along paths $z_0\path z$ belonging to a simply connected
manifold $\Omega$ and with respect to the differential forms $(u_idz)_{i\ge0}$):
\begin{enumerate}
\item The coefficients of these noncommutative generating series belong to the
differential ring $\dext{\C}{(u_i)_{i\ge0}}\{\scal{C_{z_0\path z}}{w}\}_{w\in\calX^*}$
which is closed by integration with respect to $(u_idz)_{i\ge0}$.

\item The Picard-Vessiot extension of these noncommutative differential equations is defined as the
module $C_{z_0\path z}\C1_{\calH(\Omega)}$ and the Haussdorf group $\{e^C\}_{C\in\ncs{\Lie_{C}}{\calX}}$
plays the r\^ole of differential Galois group associated with this extension.

\item These differential equations were considered as universal differential equations
by many authors \cite{cartier1,deligne,drinfeld1,drinfeld2,CM}. Universality can be seen
by replacing each letter by constant matrices (resp. holomorphic vector field, given in
\eqref{vectorfield}) and then solving a system of linear (resp. nonlinear)
differential equations, given in \eqref{nonlinear}.

\item These solutions are obtained as a pairing between the series $C_{z_0\path z}$ and the generating series
of finite Hankel (resp. Lie-Hankel) rank \cite{fliess2,PV,fliess1,reutenauerrealisation}, for linear
(resp. nonlinear) differential equations explaned by Remark \ref{rk3}.

\item Via rational series (on noncommutative indeterminates and with coefficients in rings)
\cite{berstel,reutenauer} and their non-trivial combinatorial Hopf algebras (or pseudo Hopf algebras)
(Theorems \ref{isomorphy}, \ref{residual}, \ref{exchangeable}, \ref{Subalgebras}
and \ref{Subbialgebras}),
we illustrated this theory, still under construction, with the case of linear differential
equations with singular regular singularities (Theorem \ref{PeanoBacker}) thanks to an 
equation satisfied by the Chen generating series.
\end{enumerate}

This practical study allowed also to treat the noncommutative generating series
of multiindexed polylogarithms and harmonic sums and as well as those of
their special values (polyzetas). In particular, we proved the existence of well
defined infinite sums of these polylogarithms and harmonic sums \cite{eulerianfunctions}
in order to describe solutions of differential equations (Theorem \ref{PeanoBacker}).

\end{document}